\newtheorem{theorem}{Theorem}[section]
\newtheorem{Prop}[theorem]{Proposition}
\newtheorem{Thm}[theorem]{Theorem}
\newtheorem{Lem}[theorem]{Lemma}
\newtheorem{Cor}[theorem]{Corollary}
\theoremstyle{definition}
\newtheorem{Def}[theorem]{Definition}
\theoremstyle{remark}
\newtheorem{remark}[theorem]{Remark}
\numberwithin{equation}{section}
\newcommand{\R}{{\mathbb R}}
\newcommand{\Z}{{\mathbb Z}}
\newcommand{\LG}{{\mathfrak g}}
\newcommand{\LH}{{\mathfrak h}}
\newcommand{\UU}{{\mathfrak U}}
\newcommand{\Aut}{\mbox{\rm Aut}}
\newcommand{\inner}[2]{\langle #1 , #2 \rangle} 
\newcommand{\GL}{\mathrm{GL}} 
\newcommand{\OO}{\mathrm{O}} 
\newcommand{\RH}{\R \mathrm{H}} 
\newcommand{\MM}{\mathfrak{M}} 
\newcommand{\PM}{\mathfrak{PM}}
\newcommand{\transpose}[1]{{}^t \hspace{-1pt}{#1}} 
\begin{document}

\title[The moduli spaces of left-invariant pseudo-Riemannian metrics]
{On the moduli spaces of left-invariant pseudo-Riemannian metrics on Lie groups}

\author{Akira Kubo} 
\address[A.\ Kubo]{Faculty of Economic Sciences, Hiroshima Shudo University, 
Hiroshima 731-3195, Japan} 
\email{akubo@shudo-u.ac.jp} 

\author{Kensuke Onda} 
\address[K.\ Onda]{Faculty of Teacher Education, Shumei University, Yachiyo 276-0003, Japan} 
\email{onda@mailg.shumei-u.ac.jp} 

\author{Yuichiro Taketomi}
\address[Y.\ Taketomi]{Department of Mathematics, Hiroshima University, 
Higashi-Hiroshima 739-8526, Japan}
\email{y-taketomi@hiroshima-u.ac.jp}

\author{Hiroshi Tamaru}
\address[H.\ Tamaru]{Department of Mathematics, Hiroshima University, 
Higashi-Hiroshima 739-8526, Japan}
\email{tamaru@math.sci.hiroshima-u.ac.jp}

\keywords{}
\thanks{
The first author was supported in part by Grant-in-Aid for JSPS Fellows (11J05284). 
The fourth author was partially supported by KAKENHI (24654012, 26287012).
} 

\begin{abstract} 
In this paper, 
we formulate a procedure to obtain a generalization of Milnor frames for 
left-invariant pseudo-Riemannian metrics on a given Lie group. 
This procedure is an analogue of the recent studies on left-invariant Riemannian metrics, 
and is based on the moduli space of left-invariant pseudo-Riemannian metrics. 
As one of applications, 
we show that any left-invariant pseudo-Riemannian metrics of arbitrary signature 
on the Lie groups of real hyperbolic spaces have constant sectional curvatures. 
\end{abstract} 

\maketitle


\section{Introduction}

Left-invariant Riemannian and pseudo-Riemannian metrics on Lie groups 
have been studied actively. 
In particular, they provide a lot of interesting examples of distinguished metrics, 
for examples, Einstein and Ricci soliton metrics. 
It is a natural and important problem to determine whether a given Lie group $G$ admits 
some distinguished left-invariant (pseudo-)Riemannian metrics or not. 
Both of the Riemannian and pseudo-Riemannian cases are interesting, 
but sometimes the properties and methodologies are different. 
Recently many results on the Riemannian case have been obtained 
(just as examples, see \cite{HT, HTT, KTT, TT} and references therein). 
However, the studies on the pseudo-Riemannian case seem to be still developing. 

In the studies on left-invariant metrics on Lie groups, 
particularly on the existence and nonexistence of some distinguished metrics, 
one of the difficulties may come from the fact that 
the space of left-invariant metrics has a large dimension. 
Since left-invariant metrics on $G$ and inner products on its Lie algebra $\LG$ are corresponding, 
the following kind of theorem would be helpful: 
\begin{itemize} 
\item[] 
For every inner product $\inner{}{}$ of signature $(p,q)$ on $\LG$, 
there exists a basis of $\LG$, 
which is pseudo-orthonormal with respect to $\inner{}{}$ up to scalar, 
and the bracket relations among the elements of the basis can be written with 
relatively smaller number of parameters. 
\end{itemize} 
This kind of theorem is called a \textit{Milnor-type theorem} in \cite{HTT}. 
The name comes from the famous result by Milnor (\cite{Mi}), 
who obtained this kind of theorems for 
left-invariant Riemannian metrics on all three-dimensional unimodular Lie groups 
(the obtained bases are called the \textit{Milnor frames}). 
Milnor-type theorems have also been known 
for left-invariant Riemannian metrics on Lie groups with $\dim \leq 4$ 
(\cite{Chebarikov, HL, KN1, KN2}), 
and for left-invariant pseudo-Riemannian (Lorentzian) metrics on Lie groups with $\dim = 3$ 
(\cite{CP, R92, RR06}). 
Recently, for left-invariant Riemannian metrics, 
a general procedure to obtain Milnor-type theorems has been formulated in \cite{HTT}. 
It is based on the moduli space of left-invariant Riemannian metrics on $G$, 
where the moduli space is 
defined as the orbit space of the action of $\R^\times \Aut(\LG)$ 
on the space $\MM(G)$ of left-invariant Riemannian metrics on $G$. 
In fact, an expression of the moduli space derives a Milnor-type theorem for $G$. 

In this paper, we formulate a general procedure to obtain Milnor-type theorems 
for left-invariant pseudo-Riemannian metrics on $G$. 
It is also based on the moduli space, that is, 
the orbit space of the action of $\R^\times \Aut(\LG)$ on the space $\MM_{(p,q)}(G)$ 
of left-invariant pseudo-Riemannian metrics of signature $(p,q)$ on $G$. 
In fact, the procedure itself is a straightforward analogue to the Riemannian case. 
However, by applying this procedure to two particular Lie groups, 
one can observe a different phenomena between Riemannian and pseudo-Riemannian cases. 
The reason may be because 
\begin{align} 
\MM_{(p,q)}(G) \cong \GL_{p+q}(\R) / \OO(p,q) 
\end{align} 
is a pseudo-Riemannian symmetric space if $p,q \geq1$, 
although $\MM(G) = \MM_{(n,0)}(G)$ is a Riemannian symmetric space, 
where $n = p+q = \dim G$. 

The first example of Lie group to which we apply our procedure is $G_{\RH^n}$, 
the Lie group of real hyperbolic space $\RH^n$. 
Recall that $G_{\RH^n}$ is the connected and simply-connected Lie group with Lie algebra $\LG_{\RH^n}$, 
where 
\begin{align} 
\LG_{\RH^n} = \mathrm{span} \{ e_1, \ldots , e_n \} \mbox{ with } 
[e_1, e_j] = e_j \ (j \in \{ 2, \ldots, n \}) . 
\end{align} 
Note that $G_{\RH^n}$ coincides with the solvable part of the Iwasawa decomposition of $\mathrm{SO}^0(n,1)$, 
and hence acts simply-transitively on $\RH^n$. 
By applying our procedure to $\LG_{\RH^n}$, 
we have the following Milnor-type theorem for $G_{\RH^n}$.

\begin{Thm} 
\label{thm:MTT1} 
Let $p,q \geq 1$, and 
$\inner{}{}$ be an arbitrary inner product of signature $(p,q)$ on $\LG_{\RH^{p+q}}$. 
Then, there exist $k>0$, 
$\lambda \in \{ 0, 1, 2 \}$, and 
a pseudo-orthonormal basis $\{ x_1 , \ldots , x_{p+q} \}$ with respect to $k \inner{}{}$ 
whose bracket relations are given by 
\begin{enumerate} 
\item[(i)] 
$[x_1 , x_i] = x_i$ \ $(i \in \{ 2 , \ldots , p+q-1 \})$, 
\item[(ii)] 
$[x_1 , x_{p+q}] = - \lambda x_1 + x_{p+q}$, 
\item[(iii)] 
$[x_i , x_{p+q}] = - \lambda x_i$ \ $(i \in \{ 2 , \ldots , {p+q}-1 \})$. 
\end{enumerate} 
\end{Thm}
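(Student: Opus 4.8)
The plan is to apply the general procedure. Write $n := p+q$. Since
\[
\PM_{(p,q)}(\LG_{\RH^n}) \;=\; \bigl(\R^\times \Aut(\LG_{\RH^n})\bigr) \backslash \MM_{(p,q)}(G_{\RH^n}) \;\cong\; \bigl(\R^\times \Aut(\LG_{\RH^n})\bigr) \backslash \GL_n(\R) / \OO(p,q),
\]
it suffices to exhibit a complete set of representatives of this orbit space and, for each, compute the bracket relations of the corresponding basis. So the proof has three steps: determine $\Aut(\LG_{\RH^n})$; describe the orbit space; and, for each orbit, write down an adapted pseudo-orthonormal basis and read off its brackets.

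First I would compute the automorphism group. Put $\LG' := [\LG_{\RH^n},\LG_{\RH^n}] = \mathrm{span}\{e_2,\dots,e_n\}$, which is abelian, invariant under $\Aut$, and satisfies $[e_1,w]=w$ for all $w\in\LG'$. Writing $\phi(e_1) = t e_1 + v$ with $v\in\LG'$ and applying $\phi$ to $[e_1,e_j]=e_j$ forces $t=1$; hence $\Aut(\LG_{\RH^n})$ is exactly the set of linear isomorphisms that preserve $\LG'$ and induce the identity on $\LG_{\RH^n}/\LG'$. Realized inside $\GL_n(\R)$ via $\{e_1,\dots,e_n\}$, the group $\R^\times \Aut(\LG_{\RH^n})$ is therefore precisely the stabilizer of the hyperplane $\LG'$.

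Feeding this into the double-coset picture identifies $\PM_{(p,q)}(\LG_{\RH^n})$ with the set of $\OO(p,q)$-orbits on the hyperplanes of $\R^{p,q}$, equivalently with the possible isometry types of the restriction of a signature-$(p,q)$ inner product to $\LG'$. By Witt's theorem these types are, for $p,q\ge 1$: nondegenerate of signature $(p-1,q)$; nondegenerate of signature $(p,q-1)$; and degenerate with one-dimensional lightlike radical. Thus the moduli space is a finite set, and for each type I would produce a representative metric together with an adapted basis, as follows. Any map $e_1\mapsto e_1+u$ with $u\in\LG'$ is an automorphism (since $\LG'$ is abelian) and preserves the relations (i)--(iii), so modulo such automorphisms and an overall rescaling $k>0$ one may normalize $e_1$. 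In the two nondegenerate cases one arranges $e_1\perp\LG'$, rescales so that $k\inner{e_1}{e_1}=\pm1$, and takes a pseudo-orthonormal basis $x_2,\dots,x_n$ of $(\LG',k\inner{}{}|_{\LG'})$; since $[e_1,\cdot]$ is the identity on $\LG'$ one obtains $[x_1,x_i]=x_i$ for all $i$, i.e.\ the case $\lambda=0$. In the degenerate case $\LG'$ has no pseudo-orthonormal basis, so one normalizes $e_1$ to be lightlike and orthogonal to a nondegenerate complement $W$ of the radical $\R r\subset\LG'$, obtaining a lightlike pair $e_1,r$ with $\inner{e_1}{r}\ne0$; splitting off the associated hyperbolic plane and choosing the pseudo-orthonormal basis compatibly produces the relations (i)--(iii) with $\lambda\in\{1,2\}$, the admissible value being pinned down by the requirement that $k\inner{x_1}{x_1}$ and $k\inner{x_{p+q}}{x_{p+q}}$ lie in $\{\pm1\}$. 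Since each basis change used is, up to scaling, an automorphism, the bracket relations of the resulting frame coincide with those of the representative, namely (i)--(iii).

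The main obstacle is the orbit-space computation and the signature bookkeeping it requires: one must check that the stabilizer of $\LG'$ acts transitively on the metrics of a fixed restricted type, and, in the degenerate case, that after splitting off the hyperbolic plane no continuous parameter survives beyond the discrete choice of $\lambda$. This degeneracy analysis has no counterpart in the Riemannian setting and reflects the fact that $\MM_{(p,q)}(G)$ is an indefinite symmetric space; carrying it out correctly is the crux of the proof.
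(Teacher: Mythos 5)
Your overall strategy coincides with the paper's: identify $\R^\times\Aut(\LG_{\RH^n})$ (where $n=p+q$) with the stabilizer $Q_1$ of the hyperplane $\LG'=[\LG,\LG]$, reduce to the double coset space $Q_1\backslash\GL_n(\R)/\OO(p,q)$, and exhibit three representatives. Your Witt-theorem description of this double coset space as the set of isometry types of $\inner{}{}|_{\LG'}$ is a legitimate and more conceptual substitute for the paper's explicit matrix reduction (Proposition~\ref{prop:set-of-representatives}, which uses $\OO(p)\times\OO(q)$ and an $\OO(1,1)$-normalization lemma), and it correctly yields that there are exactly three classes.

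However, the final step --- producing, for each type, a pseudo-orthonormal basis satisfying exactly (i)--(iii) --- is carried out incorrectly, and the error is not cosmetic. In any frame satisfying (i)--(iii) the derived algebra is $\mathrm{span}\{x_2,\dots,x_{n-1},-\lambda x_1+x_n\}$, on which the metric has matrix $\mathrm{diag}(\varepsilon_2,\dots,\varepsilon_{n-1},\lambda^2-1)$; hence $\lambda=0$ forces $\LG'$ to have signature $(p-1,q)$, $\lambda=1$ corresponds to the degenerate type, and $\lambda=2$ to the type of signature $(p,q-1)$. Your assignment (both nondegenerate types to $\lambda=0$, the degenerate type to $\lambda\in\{1,2\}$) is therefore wrong. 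Concretely, when $\LG'$ has signature $(p,q-1)$ your construction makes the distinguished vector $e_1\perp\LG'$ timelike, so the resulting frame is not pseudo-orthonormal in the required order ($\varepsilon_1=+1$ since $p\geq 1$), and no reordering puts its brackets into the form (i)--(iii) with $\lambda=0$, because the signature of $[\LG,\LG]$ is an invariant. To reach the stated normal form in this case you need the hyperbolic rotation in $\OO(1,1)$ sending a row vector $(x,y)$ with $x^2-y^2<0$ to $(a,2a)$ with $a>0$; this is precisely the paper's Lemma~\ref{lem:4-1}, and it is the ingredient missing from your argument. Likewise, your criterion for choosing between $\lambda=1$ and $\lambda=2$ in the degenerate case (that $k\inner{x_1}{x_1}$ and $k\inner{x_{p+q}}{x_{p+q}}$ lie in $\{\pm1\}$) does not distinguish anything --- both frames are pseudo-orthonormal --- and the degenerate type in fact corresponds only to $\lambda=1$.
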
 

Note that the bracket relations among the elements of the basis $\{ x_1 , \ldots , x_{p+q} \}$ 
contain only one discrete parameter $\lambda \in \{ 0, 1, 2 \}$. 
This enables us to calculate the curvature 
of an arbitrary pseudo-Riemannian metrics on $G_{\RH^{p+q}}$. 

\begin{Cor} 
\label{cor:cor1} 
Let $p,q \geq 1$. 
Then, 
every left-invariant pseudo-Riemannian metric of signature $(p,q)$ on $G_{\RH^{p+q}}$ 
has constant sectional curvature. 
Furthermore, any real number can be realized as the constant sectional curvature of 
a left-invariant pseudo-Riemannian metric of signature $(p,q)$ on $G_{\RH^{p+q}}$. 
\end{Cor}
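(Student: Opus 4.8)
The plan is to reduce everything to the explicit basis produced by Theorem 1.3 and then compute directly. By that theorem, after rescaling the inner product by a suitable $k>0$, we may assume that the metric is such that $\{x_1,\dots,x_{p+q}\}$ is a pseudo-orthonormal basis and the brackets are the ones in (i)--(iii), with a single discrete parameter $\lambda\in\{0,1,2\}$. Since rescaling the metric by $k>0$ rescales the sectional curvature by $1/k$ without changing whether it is constant, it suffices to prove that for each $\lambda\in\{0,1,2\}$ the resulting left-invariant pseudo-Riemannian metric has constant sectional curvature, and to track the value of that constant. First I would compute the Levi-Civita connection $\nabla_{x_i}x_j$ from the Koszul formula, which for a left-invariant metric reads $2\inner{\nabla_{x_i}x_j}{x_k}=\inner{[x_i,x_j]}{x_k}-\inner{[x_j,x_k]}{x_i}+\inner{[x_k,x_i]}{x_j}$; because the structure constants in (i)--(iii) are so sparse, this is a short computation. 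From $\nabla$ one gets the curvature tensor $R(x_i,x_j)x_k=\nabla_{x_i}\nabla_{x_j}x_k-\nabla_{x_j}\nabla_{x_i}x_k-\nabla_{[x_i,x_j]}x_k$, and then one checks that $R$ has the form $R(X,Y)Z=c\,(\inner{Y}{Z}X-\inner{X}{Z}Y)$ for a constant $c$ depending only on $\lambda$ (I expect $c$ to be a quadratic expression such as $-(1-\lambda)^2$ or similar, but in any case independent of the plane), which is exactly the condition that the space form equation holds and hence that all nondegenerate tangent planes have sectional curvature $c$.

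For the "furthermore" part, combining the scaling $k$ with the three values of $\lambda$ I would exhibit the full range of constants. If $c(\lambda)$ denotes the constant curvature obtained from the unscaled model with parameter $\lambda$, then rescaling by $k>0$ produces the constant $c(\lambda)/k$; letting $k$ range over $(0,\infty)$ and $\lambda$ over $\{0,1,2\}$, one covers every value of the same sign as some $c(\lambda)$, together with $0$ if some $c(\lambda)=0$. So the task is to verify that among $c(0),c(1),c(2)$ one has both a negative value and a positive value (and, presumably, $0$ as well, or else $0$ arises as a limit/degenerate case that must be handled separately); then every real number is realized.

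The main obstacle, and the only genuinely delicate point, is the pseudo-Riemannian subtlety: "constant sectional curvature" must be interpreted as the curvature of all \emph{nondegenerate} tangent planes, and one has to be sure the verification of the space-form identity $R(X,Y)Z=c(\inner{Y}{Z}X-\inner{X}{Z}Y)$ is done at the level of the $(1,3)$-curvature tensor (equivalently, the $(0,4)$-tensor $\inner{R(X,Y)Z}{W}$), not merely on a basis of sample planes, since in indefinite signature sample planes can be misleading. Once that tensorial identity is checked for each $\lambda$, the pseudo-Riemannian and Riemannian statements are identical in form, and the rest is the elementary scaling argument above. A secondary, purely bookkeeping, obstacle is confirming that the three discrete models for $\lambda\in\{0,1,2\}$ are not all curvature-equivalent, i.e.\ that their constants do not all have the same sign; this should fall out of the explicit value of $c(\lambda)$ computed in the first step.
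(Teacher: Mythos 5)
Your proposal is correct and follows essentially the same route as the paper: reduce to the pseudo-orthonormal basis of Theorem~\ref{thm:MTT1}, compute the Levi-Civita connection (the paper packages the Koszul formula via the operator $U$ with $\nabla_XY=\tfrac12[X,Y]+U(X,Y)$, which is equivalent), verify the tensorial space-form identity $R(X,Y)=c\,X\wedge Y$, and then combine the three values of $c(\lambda)$ with the scaling $k$. The paper's explicit constants are $c(\lambda)=-(\lambda^2\varepsilon_n+\varepsilon_1)=\lambda^2-1\in\{-1,0,3\}$ (since $\varepsilon_1=1$, $\varepsilon_n=-1$ when $p,q\geq1$), which indeed supplies a negative value, zero, and a positive value exactly as your argument requires.
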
 

Note that a left-invariant Riemannian (that is, $q=0$) metric on $G_{\RH^{p+q}}$ 
is unique up to isometry and scaling, and has constant negative curvature 
(\cite{Mi}, see also \cite{KTT, L}). 
We also note that the Lorentzian (that is, $q=1$) case of this result has been known by Nomizu (\cite{Nomizu}). 
Hence, our argument gives an alternative proof of his result in terms of a Milnor-type theorem, 
and generalizes it to the case of arbitrary signature. 

The second example to which we apply our procedure is the three-dimensional Heisenberg group $H_3$. 
We show the following Milnor-type theorem for left-invariant Lorentzian metrics on $H_3$. 
Denote by $\LH_3$ the Lie algebra of $H_3$, 
the three-dimensional Heisenberg Lie algebra. 

\begin{Thm} 
\label{thm:MTT2} 
Let $\inner{}{}$ be an arbitrary inner product of signature $(2,1)$ on $\LH_3$. 
Then, there exist $k>0$, $\lambda \in \{ 0, 1, 2 \}$, and 
a pseudo-orthonormal basis $\{ x_1 , x_2 , x_3 \}$ with respect to $k \inner{}{}$ such that 
\begin{align}
[x_1 , x_2] = \lambda (x_1 + \lambda x_3), \quad 
[x_2 , x_3] = x_1 + \lambda x_3 , \quad 
[x_3 , x_1] = 0 . 
\end{align} 
\end{Thm}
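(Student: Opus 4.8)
The plan is to follow the moduli-space procedure exactly as in the Riemannian case of \cite{HTT}, now applied to $\MM_{(2,1)}(H_3)$. First I would recall that a left-invariant Lorentzian metric on $H_3$ corresponds to an inner product of signature $(2,1)$ on $\LH_3 = \mathrm{span}\{e_1,e_2,e_3\}$ with the single bracket relation $[e_2,e_3]=e_1$ (the center spanned by $e_1$), and that the relevant symmetry group is $\R^\times \Aut(\LH_3)$. The key structural fact is that $\Aut(\LH_3)$ is large: any automorphism is determined by its action on the quotient $\LH_3/\LZ(\LH_3)$ together with a choice on the center, and conversely every $\phi \in \GL_2(\R)$ acting on $\mathrm{span}\{e_2,e_3\}$ lifts to an automorphism acting on $e_1$ by $\det\phi$ (plus the freedom to add arbitrary central components to the images of $e_2,e_3$). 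So $\Aut(\LH_3)$ has dimension $6$, while $\MM_{(2,1)}(H_3) \cong \GL_3(\R)/\OO(2,1)$ has dimension $6$ as well — this dimension count is what makes the moduli space essentially a point (up to the discrete parameter $\lambda$), just as in the Riemannian Heisenberg case.

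The core computational step is to determine a set of representatives for the orbit space $\R^\times \Aut(\LH_3) \backslash \MM_{(2,1)}(H_3)$. I would do this on the Lie-algebra side: starting from an arbitrary signature-$(2,1)$ inner product $\inner{}{}$, I want to use the $\Aut$-action to normalize it. The natural approach is to first arrange that the center $\R e_1$ sits in a convenient position relative to the metric. Here, unlike the Riemannian case, there is a genuine dichotomy: the restriction of $\inner{}{}$ to the line $\R e_1$ can be positive-definite, negative-definite, or — and this is the new pseudo-Riemannian phenomenon — the vector $e_1$ can be null. I expect these cases to be exactly what produces the parameter $\lambda$: roughly, $\lambda$ distinguishes the non-degenerate cases ($\lambda=2$, say, giving a definite center) from the degenerate/null case ($\lambda=0$) with $\lambda=1$ as an intermediate normalization, though the precise bookkeeping of which value goes with which orbit is something the moduli-space computation will dictate. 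Once the position of the center is normalized, the residual freedom (the stabilizer of that configuration inside $\R^\times\Aut(\LH_3)$, together with scaling) should be enough to bring $\inner{}{}$ to the standard $(2,1)$ form while simultaneously putting the bracket into a one- or two-parameter family; then one checks the stated bracket relations $[x_1,x_2]=\lambda(x_1+\lambda x_3)$, $[x_2,x_3]=x_1+\lambda x_3$, $[x_3,x_1]=0$ satisfy the Jacobi identity and genuinely give $\LH_3$ (e.g. the derived algebra is one-dimensional and central), and that $\{x_1,x_2,x_3\}$ is pseudo-orthonormal.

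Concretely, the steps in order: (1) identify $\MM_{(2,1)}(H_3)$ and the $\R^\times\Aut(\LH_3)$-action explicitly in coordinates; (2) split into cases according to the causal type of the center $\R e_1$; (3) in each case, run the normalization to produce an orbit representative, obtaining the discrete parameter $\lambda \in \{0,1,2\}$; (4) translate each representative metric back into a pair (pseudo-orthonormal basis, bracket relations), i.e. pick a basis adapted to the \emph{fixed} standard form of the inner product in which the bracket takes the displayed shape; (5) verify the bracket relations define $\LH_3$ and the basis is pseudo-orthonormal with respect to $k\inner{}{}$ for a suitable $k>0$.

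The main obstacle I anticipate is step (2)–(3): handling the null case for the center. When $e_1$ is null, the orthogonal complement $e_1^\perp$ is degenerate, so one cannot simply pass to an orthonormal basis of the complement and must instead work with a basis containing a null pair — this is precisely where the Lorentzian geometry departs from the Riemannian model and where the cross-terms between $x_1$ and $x_3$ in the bracket relations originate. Getting the $\Aut$-normalization to land exactly on the stated form in that case, and confirming that no continuous modulus survives (only $\lambda$), is the delicate part; the non-degenerate cases should be comparatively routine analogues of Milnor's treatment of the Heisenberg group.
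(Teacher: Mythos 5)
Your plan is viable and rests on the same underlying mechanism as the paper's proof: the discrete parameter $\lambda\in\{0,1,2\}$ records exactly the causal character of the center $\R e_1$ (in the paper's normalization, spacelike for $\lambda=0$, null for $\lambda=1$, timelike for $\lambda=2$; your tentative assignment is permuted, but you flagged that the bookkeeping is to be determined), and everything else is absorbed by the automorphism group together with scaling. The difference is organizational. You propose to normalize the inner product directly on $\LH_3$, case-by-case in the causal type of the center, which forces you to redo the linear algebra from scratch --- in particular the null case, where $e_1^\perp$ is degenerate. The paper instead reuses its general machinery: it identifies $\R^\times\Aut(\LH_3)$ with the transposed parabolic $Q_1'$ (Lemma~\ref{lem:5-1}), obtains a set of representatives for the $Q_1'$-action from the one already computed for $Q_1$ in Proposition~\ref{prop:set-of-representatives} via the elementary duality $\UU\mapsto\{{}^t u^{-1}\mid u\in\UU\}$ of Proposition~\ref{prop:dual_action}, and then simply computes the brackets of $x_i=\varphi g_0 e_i$ for $g_0=I_3-\lambda E_{3,1}$ using Theorem~\ref{thm:pseudo-key}. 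Your causal trichotomy is exactly what is encoded in Lemma~\ref{lem:4-1} (the $\OO(1,1)$-normalization of a pair $(x,y)$ according to the sign of $x^2-y^2$), so the null-case subtlety you anticipate is dispatched once and for all at that level rather than inside the Heisenberg computation. What your route buys in exchange is that the three classes are manifestly distinguished by an isometry invariant (the causal type of the derived algebra), which the paper only observes a posteriori by comparison with Rahmani's classification. To make your sketch a complete proof you would still need to carry out the normalization explicitly and exhibit $k$ and the pseudo-orthonormal basis, but no step of the plan would fail.
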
 

Note that the bracket relations among the elements of the basis $\{ x_1 , x_2 , x_3 \}$ 
again contain only one discrete parameter $\lambda \in \{ 0, 1, 2 \}$. 
This gives an alternative proof of the classification of 
left-invariant Lorentzian metrics on $H_3$ by Rahmani (\cite{R92}). 
We will also review the curvature properties of these metrics. 
In fact, the metric corresponding to $\lambda = 1$ is flat, 
and other two are Lorentzian Ricci soliton metrics.

This paper is organized as follows. 
In Sections~\ref{section:moduli} and \ref{section:key-thm}, 
we develop a procedure to obtain Milnor-type theorems for a given Lie group $G$. 
We note that this procedure can be applied to any Lie groups, at least theoretically. 
In Section~\ref{sec:action}, 
we study the orbit spaces of the actions of certain maximal parabolic subgroups. 
This may have an independent interest, 
since it could be a prototype for the study of isometric actions on pseudo-Riemannian symmetric spaces. 
In Sections~\ref{section:RHn} and \ref{section:H3}, 
we obtain Milnor-type theorems for the Lie groups $G_{\RH^n}$ and $H_3$, respectively, 
and study the curvature properties.

The authors would like to thank Yoshio Agaoka, Takayuki Okuda, and Takahiro Hashinaga, 
for valuable comments and suggestions.

\section{The moduli space of left-invariant metrics}
\label{section:moduli}

In \cite{KTT}, the notion of the space of left-invariant Riemannian metrics 
on a Lie group up to isometry and scaling has been introduced. 
In this section, we define the analogous notion for 
left-invariant pseudo-Riemannian metrics. 
Throughout this section, 
let $G$ be a Lie group of dimension $n$, 
and $\LG$ be the Lie algebra of $G$. 
We fix a basis $\{ e_1, \ldots, e_n \}$ of $\LG$, 
and identify $\LG \cong \R^{n}$ as vector spaces. 

First of all, we recall the signature of an inner product. 
Let $\inner{}{}$ be an inner product, 
not necessarily positive definite, on $\LG$. 
Then there exists a symmetric matrix $A$ such that 
\begin{align}
\inner{x}{y} = \transpose{x} A y \quad (\forall x, y \in \LG) . 
\end{align} 
Denote by $I_k$ the unit matrix of order $k$, and put 
\begin{align}
I_{p,q} := 
\left( \begin{array}{cc}
I_p & \\ & - I_q 
\end{array} \right) . 
\end{align}
Then, Sylvester's law of inertia yields that 
there exist $g \in \GL_n(\R)$ and a unique pair $(p,q)$ 
with $p,q \in \Z_{\geq 0}$ such that 
\begin{align}
\transpose{g} A g = I_{p,q} . 
\end{align}
This unique pair $(p,q)$ is called the \textit{signature} of $\inner{}{}$. 
Note that $p+q = n$. 

We here define the space of left-invariant metrics. 
We define it in the Lie algebra level, 
since there is a one-to-one correspondence 
between left-invariant (pseudo-Riemannian) metrics on $G$ 
and (indefinite) inner products on $\LG$. 

\begin{Def}
The following set is called the 
\textit{space of left-invariant pseudo-Riemannian metrics} of signature $(p,q)$ on $G$: 
\begin{align}
\MM_{(p,q)} := \{ \inner{}{} : 
\mbox{an inner product of signature $(p,q)$ on $\LG$} \} . 
\end{align} 
\end{Def} 

We next see an expression of $\MM_{(p,q)}$ as a homogeneous space. 
According to the identification $\LG \cong \R^n$, 
we also identify $\GL(\LG) \cong \GL_{n}(\R)$. 
Then $\GL_n(\R)$ acts on $\MM_{(p,q)}$ by 
\begin{align}
g.\inner{\cdot}{\cdot} := \inner{g^{-1} (\cdot)}{g^{-1} (\cdot)} . 
\end{align}
This action is transitive, because of Sylvester's law of inertia. 
Hence, $\MM_{(p,q)}$ can be expressed as a homogeneous space of $\GL_n(\R)$. 
One needs the following inner product, 
which we call the \textit{canonical inner product} of signature $(p,q)$ on $\LG \cong \R^n$: 
\begin{align} 
\label{eq:canonical}
\inner{x}{y}_0 := \transpose{x} I_{p,q} y \quad (x, y \in \LG) . 
\end{align} 

\begin{Prop}
One has a canonical identification 
\begin{align}
\MM_{(p,q)} = \GL_{n}(\R) / \OO(p,q) . 
\end{align} 
\end{Prop}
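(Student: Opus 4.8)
The plan is to use the transitive action of $\GL_n(\R)$ on $\MM_{(p,q)}$ and identify the isotropy subgroup at the canonical inner product $\inner{}{}_0$ of signature $(p,q)$ defined in \eqref{eq:canonical}. Since the action is transitive (as already noted, by Sylvester's law of inertia), the orbit-stabilizer theorem gives $\MM_{(p,q)} = \GL_n(\R)/H$, where $H$ is the stabilizer of $\inner{}{}_0$; so everything reduces to showing $H = \OO(p,q)$.

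First I would spell out the stabilizer condition. An element $g \in \GL_n(\R)$ fixes $\inner{}{}_0$ if and only if $g.\inner{\cdot}{\cdot}_0 = \inner{\cdot}{\cdot}_0$, that is, $\inner{g^{-1}x}{g^{-1}y}_0 = \inner{x}{y}_0$ for all $x,y \in \LG$. Writing this out using $\inner{x}{y}_0 = \transpose{x} I_{p,q} y$ and replacing $g^{-1}$ by $h$ (which ranges over all of $\GL_n(\R)$ as $g$ does), the condition becomes $\transpose{h} I_{p,q} h = I_{p,q}$, which is precisely the defining equation of $\OO(p,q)$. Hence $h = g^{-1} \in \OO(p,q)$, and since $\OO(p,q)$ is a group this is equivalent to $g \in \OO(p,q)$. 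Therefore the isotropy subgroup at $\inner{}{}_0$ is exactly $\OO(p,q)$.

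Finally I would assemble the homogeneous space identification: transitivity plus the computation of the isotropy subgroup yields the bijection $\GL_n(\R)/\OO(p,q) \to \MM_{(p,q)}$, $g\OO(p,q) \mapsto g.\inner{}{}_0$, and one checks it is the canonical one. I do not expect any genuine obstacle here; the only point requiring a moment of care is the bookkeeping with inverses and transposes (making sure the condition on $g$ unwinds to the condition $\transpose{h}I_{p,q}h = I_{p,q}$ on $h = g^{-1}$, and that passing between $g$ and $g^{-1}$ is harmless because $\OO(p,q)$ is a subgroup). If one wished to upgrade the set-theoretic identification to a diffeomorphism, one would additionally invoke the standard fact that the orbit map of a smooth transitive Lie group action on a manifold induces a diffeomorphism with the quotient by the (closed) stabilizer, but for the stated proposition the algebraic identification is what is needed.
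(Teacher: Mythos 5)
Your proposal is correct and follows exactly the paper's route: transitivity of the $\GL_n(\R)$-action (via Sylvester's law of inertia) plus identification of the isotropy subgroup at $\inner{}{}_0$ as $\OO(p,q)$, then the standard homogeneous-space identification. You merely make explicit the inverse-unwinding computation that the paper leaves as ``one can easily see.''
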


\begin{proof} 
One can easily see that the isotropy subgroup of $\GL_n(\R)$ 
at $\inner{}{}_0$ coincides with 
\begin{align} 
\OO(p,q) := \{ g \in \GL_n(\R) \mid \transpose{g} I_{p,q} g = I_{p,q} \} . 
\end{align} 
Hence, by virtue of a standard theory of homogeneous spaces, 
one obtains an expression as a homogeneous space. 
\end{proof} 

We now define the moduli space of left-invariant pseudo-Riemannian metrics, 
as the orbit space of a certain group action on $\MM_{(p,q)}$. 
Let us consider the automorphism group of $\LG$, 
\begin{align}
\Aut(\LG) 
:= \{ \varphi \in \GL_n(\R) \mid 
\varphi ([\cdot , \cdot]) = [\varphi(\cdot) , \varphi(\cdot) ] \} . 
\end{align}
Denote by $\R^\times := \R \setminus \{ 0 \}$ the multiplicative group of $\R$. 
The group we consider in this paper is 
\begin{align}
\R^\times \Aut(\LG) 
:= \{ c \varphi \in \GL_n(\R) \mid 
c \in \R^\times , \, \varphi \in \Aut(\LG) \} . 
\end{align}
Since this group is a subgroup of $\GL_n(\R)$, 
it naturally acts on $\MM_{(p,q)}$. 
We denote the orbit through $\inner{}{}$ by $\R^\times \Aut(\LG).\inner{}{}$. 

\begin{Def}
The orbit space of the action of $\R^{\times} \Aut(\LG)$ on $\MM_{(p,q)}$ 
is called the 
\textit{moduli space of left-invariant pseudo-Riemannian metrics} 
of signature $(p,q)$ on $G$, and denoted by 
\begin{align}
\PM_{(p,q)} 
:= \R^{\times} \Aut(\LG) \backslash \MM_{(p,q)} 
:= \{ \R^{\times} \Aut(\LG) . \inner{}{} \mid 
\inner{}{} \in \MM_{(p,q)} \} . 
\end{align}
\end{Def}

Note that the action of $\R^{\times} \Aut(\LG)$ on $\MM_{(p,q)}$ 
gives rise to isometry up to scaling of left-invariant pseudo-Riemannian metrics. 
This follows from a similar argument for the Riemannian case 
(for example, see \cite[Remark~2.3]{KTT}). 

\section{Milnor-type theorems for left-invariant metrics}
\label{section:key-thm}

In this section, we show that an expression of the moduli space $\PM_{(p,q)}$ 
derives a Milnor-type theorem for left-invariant pseudo-Riemannian metrics on $G$. 
The story is analogous to the arguments of \cite{HTT}, 
in which Milnor-type theorems for left-invariant Riemannian metrics have been studied. 

When we express the moduli space $\PM_{(p,q)}$, 
we use the following notion of a set of representatives. 
Recall that $\inner{}{}_0$ denotes the canonical inner product of signature $(p,q)$ on $\LG \cong \R^{p+q}$. 
We define a set of representatives for a group action on 
$\MM_{p+q} = \GL_{p+q}(\R) / \OO(p,q)$. 

\begin{Def} 
Let $H$ be a subgroup of $\GL_{p+q}(\R)$, and consider the action of $H$ on $\MM_{p+q}$. 
Then, a subset $\UU \subset \GL_n(\R)$ is called a 
\textit{set of representatives} 
of this action if the orbit space satisfies 
\begin{align} 
H \backslash \MM_{p+q} = \{ H.(g_0.\inner{}{}_0) \mid g_0 \in \UU \} . 
\end{align} 
\end{Def} 

Note that, by a set of representatives, 
we do not mean that it is a set of complete representatives. 
For example, 
$\UU := \GL_n(\R)$ is a set of representatives for any action. 

In order to formulate a key theorem to obtain Milnor-type theorems, 
we need a pseudo-orthonormal basis. 
Let $\inner{}{}$ be an inner product of signature $(p,q)$ on $\LG$. 
For the later convenience, we put 
\begin{align}
\varepsilon_i := \left\{ 
\begin{array}{cl}
1 & (i \in \{ 1, \ldots , p\} ) , \\ 
-1 & (i \in \{ p+1, \ldots , n \} ) . 
\end{array}
\right. 
\end{align}
Then, a basis $\{ x_1 , \ldots, x_n \}$ of $\LG$ is said to be 
\textit{pseudo-orthonormal} 
with respect to $\inner{}{}$ if it satisfies 
\begin{align}
\inner{x_i}{x_j} = \varepsilon_i \delta_{ij} \qquad 
(\forall i, j \in \{ 1, \ldots, n \}) . 
\end{align}
Here, $\delta_{ij}$ denotes the Kronecker delta. 

\begin{theorem} 
\label{thm:pseudo-key} 
Let $\UU$ be a set of representatives of the action of $\R^\times \Aut(\LG)$ on $\MM_{(p,q)}$. 
Then, for every inner product $\inner{}{}$ of signature $(p,q)$ on $\LG$, 
there exist $k>0$, $\varphi \in \Aut(\LG)$, and $g_0 \in \UU$ such that 
$\{ \varphi g_0 e_1 , \ldots , \varphi g_0 e_n \}$ 
is pseudo-orthonormal with respect to $k \inner{}{}$. 
\end{theorem}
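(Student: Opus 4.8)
The plan is to unwind the definitions: a set of representatives $\UU$ encodes, by construction, the fact that every $\R^\times\Aut(\LG)$-orbit in $\MM_{(p,q)}$ meets the set $\{g_0.\inner{}{}_0 \mid g_0\in\UU\}$. So given an arbitrary $\inner{}{}$ of signature $(p,q)$, I would first apply the definition of $\UU$ to produce $g_0\in\UU$ with $\R^\times\Aut(\LG).\inner{}{} = \R^\times\Aut(\LG).(g_0.\inner{}{}_0)$; in particular $\inner{}{}$ lies in this common orbit, so there are $c\in\R^\times$ and $\varphi\in\Aut(\LG)$ with $\inner{}{} = (c\varphi).(g_0.\inner{}{}_0) = (c\varphi g_0).\inner{}{}_0$.

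Next I would translate this equality of inner products into a statement about bases. Recalling that the $\GL_n(\R)$-action is $g.\inner{\cdot}{\cdot} = \inner{g^{-1}(\cdot)}{g^{-1}(\cdot)}$, the identity $\inner{}{} = (c\varphi g_0).\inner{}{}_0$ reads
\begin{align}
\inner{x}{y} = \inner{(c\varphi g_0)^{-1}x}{(c\varphi g_0)^{-1}y}_0 \qquad (\forall x,y\in\LG).
\end{align}
Evaluating on $x = (c\varphi g_0)e_i$, $y = (c\varphi g_0)e_j$ and using that $\{e_1,\ldots,e_n\}$ is pseudo-orthonormal for $\inner{}{}_0$ (which holds since $\inner{e_i}{e_j}_0 = \transpose{e_i}I_{p,q}e_j = \varepsilon_i\delta_{ij}$), I get $\inner{(c\varphi g_0)e_i}{(c\varphi g_0)e_j} = \varepsilon_i\delta_{ij}$. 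So $\{c\varphi g_0 e_1,\ldots,c\varphi g_0 e_n\}$ is already pseudo-orthonormal with respect to $\inner{}{}$ itself.

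Finally I would absorb the scalar $c$ into a rescaling of the metric to reach the stated form, which insists on a positive scalar $k>0$ and on the basis being $\{\varphi g_0 e_1,\ldots,\varphi g_0 e_n\}$ without the factor $c$. From $\inner{(c\varphi g_0)e_i}{(c\varphi g_0)e_j} = \varepsilon_i\delta_{ij}$ and bilinearity, $\inner{\varphi g_0 e_i}{\varphi g_0 e_j} = \varepsilon_i\delta_{ij}/c^2$, i.e. $(c^2\inner{}{})(\varphi g_0 e_i, \varphi g_0 e_j) = \varepsilon_i\delta_{ij}$. Setting $k := c^2 > 0$ gives a pseudo-orthonormal basis $\{\varphi g_0 e_1,\ldots,\varphi g_0 e_n\}$ for $k\inner{}{}$, as required. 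I do not expect a genuine obstacle here: the statement is essentially a bookkeeping translation of the orbit-space description into frame language, and the only point needing a little care is the handling of the scalar $c\in\R^\times$ (which may be negative) versus the required $k>0$, resolved by the squaring above.
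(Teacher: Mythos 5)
Your proposal is correct and follows essentially the same route as the paper's proof: extract $g_0\in\UU$ and $c\varphi$ from the orbit description, observe $\inner{}{}=(c\varphi g_0).\inner{}{}_0$, and absorb the possibly negative scalar $c$ by setting $k:=c^2>0$. The only cosmetic difference is that the paper rescales the metric first and then evaluates on the basis, whereas you evaluate first and rescale afterwards; the computations are identical.
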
 

\begin{proof}
Take an arbitrary inner product $\inner{}{}$ on $\LG$ of signature $(p,q)$. 
Since $\UU$ is a set of representatives, 
there exists $g_0 \in \UU$ such that 
\begin{align}
\inner{}{} \in \R^\times \Aut(\LG).(g_0.\inner{}{}_0) . 
\end{align}
Hence, there exist $c \in \R^\times$ and $\varphi \in \Aut(\LG)$ such that 
\begin{align}
\inner{}{} = (c \varphi).(g_0.\inner{}{}_0) = (c \varphi g_0).\inner{}{}_0 . 
\end{align}
Let us put $k := c^2 > 0$. 
Then we have 
\begin{align}
k \inner{\cdot}{\cdot} 
= k \inner{(c \varphi g_0)^{-1} (\cdot)}{(c \varphi g_0)^{-1} (\cdot)}_0 
= \inner{(\varphi g_0)^{-1} (\cdot)}{(\varphi g_0)^{-1} (\cdot)}_0 . 
\end{align}
Since $\{ e_1, \ldots, e_n \}$ is pseudo-orthonormal with respect to $\inner{}{}_0$, one has 
\begin{align}
k \inner{\varphi g_0 e_i}{\varphi g_0 e_j} 
= \inner{e_i}{e_j} 
= \varepsilon_i \delta_{ij} , 
\end{align}
which completes the proof. 
\end{proof}

When we apply this theorem to a given $\LG$, 
we put $x_i := \varphi g_0 e_i$, 
and study the bracket relations among them. 
Note that $\varphi$ does not give any effects on the bracket relations, 
since it is an automorphism. 
Hence we have only to consider $g_0 \in \UU$. 
In particular, if $\UU$ contains only $l$ parameters, 
then so do the bracket relations among $\{ x_1, \ldots , x_n \}$. 
This is a procedure to obtain Milnor-type theorems. 
We emphasize that this procedure can be applied to any Lie algebras $\LG$. 

\section{Sets of representatives of some actions} 
\label{sec:action} 

In order to obtain a Milnor-type theorem, 
a key step is to give a set of representatives $\UU$. 
In this section, 
we give sets of representatives of some actions, 
which are given by some maximal parabolic subgroups. 
The result of this section is used essentially in the latter sections. 

In order to study our actions, we state two lemmas. 
The first one is a simple criteria for a subset $\UU \subset \GL_{p+q}(\R)$ to be a set of representatives. 
Let $H$ be a subgroup of $\GL_{p+q}(\R)$, 
and consider the action of $H$ on $\MM_{(p,q)} = \GL_{p+q} / \OO(p,q)$. 
We denote the double coset of $g \in \GL_{p+q}(\R)$ by 
\begin{align}
[[g]] := H g \, \OO(p,q) . 
\end{align}

\begin{Lem} 
\label{lem:representatives}
Consider an action of $H \subset \GL_{p+q}(\R)$ on $\MM_{(p,q)}$. 
Then, a subset $\UU \subset \GL_{p+q}(\R)$ is a set of representatives of this action 
if and only if for any $g \in \GL_{p+q}(\R)$, 
there exists $g_0 \in \UU$ such that $g_0 \in [[g]]$. 
\end{Lem}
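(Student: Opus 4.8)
The plan is to translate the statement into the language of double cosets, after which both implications become purely formal. First I would record the two elementary facts behind the preceding proposition: by transitivity of the $\GL_{p+q}(\R)$-action on $\MM_{(p,q)}$, every inner product of signature $(p,q)$ on $\LG$ is of the form $g.\inner{}{}_0$ for some $g \in \GL_{p+q}(\R)$; and since $\OO(p,q)$ is precisely the isotropy subgroup at $\inner{}{}_0$, one has $g.\inner{}{}_0 = g'.\inner{}{}_0$ if and only if $g^{-1}g' \in \OO(p,q)$, i.e. $g' \in g \, \OO(p,q)$.

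The heart of the argument is the following identity, valid for all $g, g' \in \GL_{p+q}(\R)$:
\begin{align}
H.(g.\inner{}{}_0) = H.(g'.\inner{}{}_0) \quad \Longleftrightarrow \quad g' \in [[g]] = Hg \, \OO(p,q) .
\end{align}
The left-hand equality means $g'.\inner{}{}_0 = (hg).\inner{}{}_0$ for some $h \in H$, which by the isotropy fact above is equivalent to $g' \in hg \, \OO(p,q) \subseteq Hg \, \OO(p,q)$; the converse inclusion is immediate from the definition of the action. Since distinct double cosets are disjoint and $g \in [[g]]$, the condition ``$g_0 \in [[g]]$'' is symmetric in $g$ and $g_0$ and amounts to $[[g_0]] = [[g]]$.

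Granting this, the ``if'' direction runs as follows: take an arbitrary $H$-orbit in $\MM_{(p,q)}$, write a representative of it as $g.\inner{}{}_0$, use the hypothesis to choose $g_0 \in \UU$ with $g_0 \in [[g]]$, and conclude $H.(g_0.\inner{}{}_0) = H.(g.\inner{}{}_0)$. Hence every $H$-orbit lies in $\{ H.(g_0.\inner{}{}_0) \mid g_0 \in \UU \}$; the reverse inclusion holds trivially because each $H.(g_0.\inner{}{}_0)$ is an $H$-orbit, so $\UU$ is a set of representatives. Conversely, if $\UU$ is a set of representatives, then for any $g \in \GL_{p+q}(\R)$ the orbit $H.(g.\inner{}{}_0)$ belongs to $H \backslash \MM_{(p,q)} = \{ H.(g_0.\inner{}{}_0) \mid g_0 \in \UU \}$, so some $g_0 \in \UU$ satisfies $H.(g_0.\inner{}{}_0) = H.(g.\inner{}{}_0)$, and the displayed identity gives $g_0 \in [[g]]$.

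I do not expect a genuine obstacle: the assertion is essentially the orbit--coset dictionary for the $H$-action on $\GL_{p+q}(\R) / \OO(p,q)$. The only point demanding care is the bookkeeping with the convention $g.\inner{\cdot}{\cdot} = \inner{g^{-1}(\cdot)}{g^{-1}(\cdot)}$, which puts $\OO(p,q)$ on the right and $H$ on the left, so that the invariant classifying $H$-orbits really is the double coset $Hg \, \OO(p,q)$.
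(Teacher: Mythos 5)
Your proof is correct and is precisely the ``standard argument of double cosets'' that the paper invokes (deferring the details to \cite[Lemma~2.3]{HTT}): the key identity $H.(g.\inner{}{}_0) = H.(g'.\inner{}{}_0) \Leftrightarrow g' \in Hg\,\OO(p,q)$, derived from transitivity and the isotropy computation, is exactly the orbit--coset dictionary underlying the paper's one-line proof. No discrepancy to report.
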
 

\begin{proof} 
This follows from a standard argument of double cosets. 
In fact, the proof for the Riemannian case can be found in \cite[Lemma~2.3]{HTT}, 
and the proof for this lemma is exactly the same. 
\end{proof} 

The second lemma is a general property of the natural action of $\OO(1,1)$. 

\begin{Lem} \label{lem:4-1}
Let $(x,y) \neq (0,0)$.
Then there exist $a>0$, $\lambda \in \{ 0,1,2 \}$, and $g \in \OO(1,1)$ such that 
$(x,y)\, g = (a , \lambda a)$ holds. 
\end{Lem}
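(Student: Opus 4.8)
The plan is to exploit the fact that $\OO(1,1)$ preserves the quadratic form $Q(x,y) := x^2 - y^2$ and acts transitively on each of its nonzero level sets, and then to read off $\lambda$ from the sign of $Q(x,y)$.

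First I would record that $\OO(1,1)$ preserves $Q$: since $\OO(1,1)$ is closed under transposition, the defining relation $\transpose{g}\, I_{1,1}\, g = I_{1,1}$ also yields $g\, I_{1,1}\, \transpose{g} = I_{1,1}$, whence $Q((x,y)g) = Q(x,y)$ for all $g \in \OO(1,1)$. Consequently, if $(x,y)g = (a,\lambda a)$ holds, then $Q(x,y) = a^2(1-\lambda^2)$, which is positive, zero, or negative according as $\lambda = 0$, $1$, or $2$. This dictates the choice of $\lambda$, namely $\lambda := 0$ when $Q(x,y) > 0$, $\lambda := 1$ when $Q(x,y) = 0$, and $\lambda := 2$ when $Q(x,y) < 0$; and it remains to check in each case that $(x,y)$ can indeed be moved by some $g \in \OO(1,1)$ to a vector of the form $(a,\lambda a)$ with $a>0$.

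Then I would dispose of the three cases by exhibiting $g$ explicitly as a product of a sign matrix $\mathrm{diag}(\pm 1,\pm 1)\in\OO(1,1)$ and a hyperbolic rotation $b_t$, which acts by $(x,y)\mapsto (x\cosh t + y\sinh t,\ x\sinh t + y\cosh t)$. If $Q(x,y)>0$ then $|x|>|y|$; after applying $\mathrm{diag}(-1,1)$ if needed we may assume $x>0$, and choosing $t$ with $\tanh t = -y/x$ (possible since $|y/x|<1$) sends $(x,y)$ to $(a,0)$ with $a>0$, necessarily $a=\sqrt{Q(x,y)}$ by invariance. If $Q(x,y)<0$ then $|y|>|x|$; after applying $\mathrm{diag}(1,-1)$ if needed we may assume $y>0$, and since the $b_t$-orbit of such a vector is precisely the branch $\{\, y>0,\ y^2-x^2=-Q(x,y)\,\}$, which contains the point $(a,2a)$ with $a=\sqrt{-Q(x,y)/3}$, a suitable $b_t$ finishes the job. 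If $Q(x,y)=0$ with $(x,y)\neq(0,0)$, then $(x,y)$ lies on one of the four open rays of the null cone, and a sign matrix moves it onto the ray $\{(t,t):t>0\}$, giving $(x,y)g=(a,a)$ for some $a>0$.

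Since each case reduces to a one-line computation, the only real care needed is the bookkeeping forced by the \emph{disconnectedness} of $\OO(1,1)$: one must check that the sign matrices used to normalize the signs of the coordinates genuinely belong to $\OO(1,1)$ (they do), and that together with the hyperbolic rotations they suffice to reach the prescribed normal form on each level set of $Q$. I expect this case analysis --- in particular the null case, where the relevant level set is a union of four rays rather than a single connected curve --- to be the main, though rather mild, obstacle.
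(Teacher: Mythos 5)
Your proof is correct and follows essentially the same route as the paper: both argue by trichotomy on the sign of $x^2-y^2$ and reduce to the normal forms $(a,0)$, $(a,a)$, $(a,2a)$. The only difference is presentational --- the paper writes down one explicit matrix in $\OO(1,1)$ per case, while you factor it as a sign matrix times a hyperbolic rotation and invoke transitivity on the level sets of $x^2-y^2$; your opening observation that invariance of the form forces the value of $\lambda$ is a nice (if unneeded) addition.
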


\begin{proof}
We divide the proof into three cases. 
The first case is when $x^2 - y^2 > 0$. 
In this case, let us take 
\begin{align}
a := (x^2 - y^2)^{1/2} > 0, \quad 
        g := 
\frac{1}{a}
             \left( \begin{array}{cc} x & -y \\ -y & x \end{array} \right) 
\in \OO(1,1). 
\end{align}
Then one has $(x , y)\, g = (a , 0)$. 
Hence, by putting $\lambda := 0$, 
we complete the proof for this case. 

The second case is when $x^2 - y^2 = 0$. 
In this case, one can choose 
\begin{align}
g \in \left\{ 
\left( \begin{array}{cc} 
\pm 1 & 0 \\ 0 & \pm 1 
\end{array} \right) \right\} 
\subset \OO(1,1) 
\end{align}
such that $(x , y)\, g = (|x| , |y|)$. 
Hence, by putting $a := |x| = |y| > 0$ and $\lambda := 1$, 
we complete the proof for this case. 

The last case is when $x^2 - y^2 < 0$. 
In this case, let us take 
\begin{align}
a := ((y^2 - x^2)/3)^{1/2} > 0, \quad 
        g := 
\frac{1}{3a}
             \left( \begin{array}{cc} 2y-x & y-2x \\ y-2x & 2y-x \end{array} \right)
\in \OO(1,1). 
\end{align}
Then one has $(x , y)\, g = (a , 2a)$. 
Hence, by putting $\lambda := 2$, 
we complete the proof for this case. 
\end{proof} 

We now study actions on $\MM_{(p,q)}$. 
One action we consider in this section is given by the following group: 
\begin{align} 
Q_1 := \left\{ \left( 
\begin{array}{c|cccc} 
 \ast	&0	&\cdots	&0	\\\hline 
 \ast   &	&       &   	\\ 
 \vdots	&   	&\ast	&   	\\ 
 \ast	&   	&       &	\\ 
\end{array} 
\right) \in \GL_{p+q} (\R) \right\} , 
\end{align} 
where the size of the block decomposition is $(1 , p+q-1)$. 
This $Q_1$ is known to be a maximal parabolic subgroup of $\GL_{p+q}(\R)$. 
The following gives a set of representatives of the action of $Q_1$. 

\begin{Prop}
\label{prop:set-of-representatives}
Let $p,q \geq 1$, and consider the action of $Q_1$ on $\MM_{(p,q)}$. 
Then, the following $\UU$ is a set of representatives$:$ 
\begin{align}
\UU := 
\left\{ 
I_{p+q} + \lambda E_{1, p+q} \mid \lambda = 0,1,2 
\right\} . 
\end{align}
\end{Prop}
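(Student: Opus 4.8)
By Lemma~\ref{lem:representatives}, it suffices to show that for every $g \in \GL_{p+q}(\R)$ there is some $\lambda \in \{0,1,2\}$ with $I_{p+q} + \lambda E_{1,p+q} \in [[g]] = Q_1 \, g \, \OO(p,q)$. Equivalently, I want to act on $g$ on the left by $Q_1$ and on the right by $\OO(p,q)$ and reduce it to one of these three normal forms. The plan is to reinterpret this double-coset reduction geometrically: the double coset $Q_1 g \, \OO(p,q)$ is essentially determined by how the flag fixed by $Q_1$ (namely the line $\R e_1$, or rather its annihilator, the hyperplane spanned by $e_2,\dots,e_{p+q}$) sits with respect to the quadratic form $\inner{}{}_0$ carried over by $g$. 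So I would first translate the statement into: given the inner product $\inner{}{}:=g.\inner{}{}_0$ of signature $(p,q)$, the $Q_1$-orbit of $\inner{}{}$ in $\MM_{(p,q)}$ has a representative of the form $(I_{p+q}+\lambda E_{1,p+q}).\inner{}{}_0$ for some $\lambda\in\{0,1,2\}$.

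The key steps I would carry out are as follows. First, normalize using the lower-right $\OO(p,q)$-type freedom: the subgroup of $Q_1$ that is block-lower-triangular with identity in the $(1,1)$ slot acts on the hyperplane $V:=\mathrm{span}\{e_2,\dots,e_{p+q}\}$, and together with the stabilizer inside $\OO(p,q)$ one can bring the restriction of $\inner{}{}$ to the relevant subspaces into standard shape; concretely I expect to reduce to the case where the Gram matrix of $\inner{}{}$ (or of $g^{-1}$) has a very sparse form, with all the nontrivial data concentrated in the $(1,1)$ entry and the $(1,p+q)$ (equivalently $(p+q,1)$) entry — i.e.\ a $2\times 2$ block in the "$(e_1,e_{p+q})$-plane" tensored with an identity elsewhere. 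Second, once reduced to that $2\times2$ block, observe that the residual group acting is exactly $\OO(1,1)$ acting on that hyperbolic plane (the $e_1$--$e_{p+q}$ plane, which has signature $(1,1)$ since one of $e_1,e_{p+q}$ is positive and the other negative for the canonical form, or after suitable arrangement), together with the scaling from the $(1,1)$ entry of $Q_1$. Then apply Lemma~\ref{lem:4-1}: the off-diagonal datum is a vector $(x,y)\neq(0,0)$ in this plane, and Lemma~\ref{lem:4-1} produces $a>0$, $\lambda\in\{0,1,2\}$, $g'\in\OO(1,1)$ with $(x,y)g'=(a,\lambda a)$; absorbing $a$ into the $Q_1$-scaling yields precisely the normal form $I_{p+q}+\lambda E_{1,p+q}$.

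I would then need to check the boundary bookkeeping: that the vector $(x,y)$ arising is genuinely nonzero (this should follow because $g$ is invertible, so the line $\R e_1$ is not isotropic-and-degenerate in a way that kills everything — more precisely, if both components vanished the form $\inner{}{}$ would be degenerate or of the wrong signature, contradicting nondegeneracy of signature $(p,q)$), and that the $\OO(1,1)$ produced by Lemma~\ref{lem:4-1} extends to a genuine element of $\OO(p,q)$ (embed it in the $e_1$--$e_{p+q}$ block, identity on the complementary $(p-1,q-1)$-dimensional space). One also verifies the three candidate matrices $I_{p+q}+\lambda E_{1,p+q}$ lie in $\GL_{p+q}(\R)$ (they are unipotent, so obviously invertible) and that the associated inner products $(I_{p+q}+\lambda E_{1,p+q}).\inner{}{}_0$ indeed have signature $(p,q)$, which needs a short computation but is forced since $I_{p+q}+\lambda E_{1,p+q}\in\GL_{p+q}(\R)$ preserves signature.

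\textbf{Main obstacle.} The hard part will be the first reduction step — cleanly organizing the simultaneous left-action of the parabolic $Q_1$ and right-action of $\OO(p,q)$ to peel off all entries except the $(1,1)$ entry and the single off-diagonal pair, while keeping careful track of which residual group survives. In particular one must be sure that after using up the "obvious" freedom, exactly an $\OO(1,1)$ (and a positive scaling) remains to feed into Lemma~\ref{lem:4-1}, rather than something larger (which would give a coarser invariant) or smaller (which would fail to reach all three values of $\lambda$). The signature hypothesis $p,q\geq 1$ is what guarantees the surviving plane is of Lorentzian type $(1,1)$ and hence that Lemma~\ref{lem:4-1} is the right tool; I would want to make the role of that hypothesis explicit at the point where the $(1,1)$-block is isolated.
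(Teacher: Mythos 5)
Your architecture is the right one --- Lemma~\ref{lem:representatives} to reduce to a double-coset computation, Lemma~\ref{lem:4-1} applied through the embedding $\OO(1,1)\hookrightarrow\OO(p,q)$ (which is where $p,q\geq 1$ enters, exactly as you say), and invertibility of $g$ to rule out $(x,y)=(0,0)$. But the step you yourself flag as the ``main obstacle'' --- isolating a single $2\times 2$ block on which $\OO(1,1)$ acts, with everything else trivialized --- is precisely the content of the proposition, and as written you have described the target normal form rather than produced a reduction to it. So this is a plan with a hole at the crux, not yet a proof. The hole is made wider by your choice of coordinates: passing to the Gram matrix of $g.\inner{}{}_0$ turns the problem into classifying symmetric matrices under congruence by $Q_1$, where the bookkeeping you are worried about (which residual subgroup survives, why the leftover is exactly $\OO(1,1)$ and a scaling) genuinely requires care.

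The missing idea that closes the gap is to work on the matrix $g$ itself and to notice that the left $Q_1$-action is enormous: for $h\in Q_1$ one has $(hg)_{1j}=h_{11}g_{1j}$, while rows $2,\ldots,p+q$ of $hg$ can be made into \emph{any} collection completing the first row to an invertible matrix. Hence the double coset $[[g]]=Q_1\,g\,\OO(p,q)$ remembers only the first row of $g$, up to a nonzero scalar and the right $\OO(p,q)$-action, and no delicate accounting of a ``residual subgroup'' is needed. Concretely: right-multiplying by $\OO(p)\times\OO(q)\subset\OO(p,q)$ brings the first row to $(x,0,\ldots,0,y)$ with $(x,y)\neq(0,0)$ by invertibility; Lemma~\ref{lem:4-1}, via the embedded $\OO(1,1)$ acting on the first and last coordinates, brings it to $(a,0,\ldots,0,\lambda a)$ with $a>0$ and $\lambda\in\{0,1,2\}$; and then two explicit left multiplications by elements of $Q_1$ rescale the first row by $1/a$, clear the first column below the $(1,1)$-entry, and replace the lower-right $(p+q-1)\times(p+q-1)$ block by the identity, landing on $I_{p+q}+\lambda E_{1,p+q}$. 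Your concluding worries (that the $\OO(1,1)$ element extends to $\OO(p,q)$, that the representatives are invertible and of the right signature) are all correct and easy; the substance you still owe is the displayed chain of multiplications, which in these coordinates is three short matrix computations.
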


\begin{proof}
Take any $g = (g_{ij}) \in \GL_{p+q}(\R)$. 
According to Lemma~\ref{lem:representatives}, 
we have only to prove that 
there exists $g_0 \in \UU$ such that $g_0 \in [[g]]$. 
Recall that 
\begin{align*} 
[[g]] = Q_1 \, g \, \OO(p,q) . 
\end{align*} 
First of all, 
one knows that there exist $h_1 \in \OO(p)$ and $h_2 \in \OO(q)$ such that 
\begin{align}
(g_{11}, \ldots, g_{1 p}) \, h_1 = (x, 0, \ldots, 0) , \quad 
(g_{1(p+1)}, \ldots, g_{1 (p+q)}) \, h_2 = (0, \ldots, 0, y) . 
\end{align} 
Since $\OO(p) \times \OO(q)$ is naturally a subgroup of $\OO(p,q)$, we have 
\begin{align} 
[[g]] \ni g 
\left( \begin{array}{cc} 
h_1 & \\ & h_2 
\end{array} \right) 
= \left( 
\begin{array}{c|cccc}
 x	&0	&\cdots	&0      &y	\\\hline
 \ast	&\ast	&\cdots	&\cdots &\ast	\\
 \vdots	&\vdots	&\ddots	&	&\vdots	\\
 \vdots	&\vdots	&	&\ddots	&\vdots	\\
 \ast	&\ast	&\cdots	&\cdots &\ast	\\
\end{array} 
\right) =: g_1 . 
\end{align} 
One has $(x,y) \ne (0,0)$, since $\det (g_1) \neq 0$. 
Furthermore, $\OO(1,1)$ is naturally a subgroup of $\OO(p,q)$, since $p,q \geq 1$. 
Hence, Lemma~\ref{lem:4-1} yields that 
there exist $a>0$, $\lambda \in \{ 0,1,2 \}$, and $k \in \OO(p,q)$ such that 
\begin{align} 
[[g]] \ni g_1 k = \left( 
\begin{array}{c|cccc} 
 a      &0	&\cdots	&0	&a\lambda \\\hline 
 a_2	&\ast	&\cdots	&\cdots &\ast	\\
 \vdots	&\vdots	&\ddots	&	&\vdots	\\
 \vdots	&\vdots	&	&\ddots	&\vdots	\\
 a_{p+q}	&\ast	&\cdots	&\cdots &\ast	\\
\end{array} 
\right) =: g_2 , 
\end{align} 
where $a_2, \ldots, a_{p+q} \in \R$. 
It then follows from the definition of $Q_1$ that 
\begin{align}
[[g]] \ni 
\left(
\begin{array}{c|ccc}
 1/a	&0	&\cdots	&0	\\\hline
 -a_2   &a	&       &   	\\
 \vdots &   	&\ddots	&   	\\
 -a_{p+q}   &   	&       &a	\\
\end{array}
\right) 
g_2 = \left(
\begin{array}{c|ccc}
 1	&0	&\cdots	&0	\\\hline
 0   	&	&       &   	\\
 \vdots	&   	&\alpha	&   	\\
 0	&   	&       &	\\
\end{array}
\right) + \lambda E_{1,{p+q}} =: g_3 . 
\end{align}
Since $0 \neq \det (g_3) = \det(\alpha)$, we conclude that 
\begin{align} 
[[g]] \ni 
\left( 
\begin{array}{c|ccc}
 1	&0	&\cdots	&0	\\\hline
 0   	&	&       &   	\\
 \vdots	&   	&\alpha^{-1}	&   	\\
 0	&   	&       &	\\
\end{array} 
\right) g_3 = I_{p+q} + \lambda E_{1, p+q} =: g_0 . 
\end{align} 
One can see that $g_0 \in \UU$, which completes the proof. 
\end{proof}

Another action we study is given by the transpose of $Q_1$. 
Note that, if $H$ is a subgroup of $\GL_{p+q}(\R)$, then so is 
\begin{align} 
H' := \{ {}^t h \mid h \in H \} . 
\end{align} 
By the following proposition and Proposition~\ref{prop:set-of-representatives}, 
one can obtain a set of representatives of the action of $Q'_1$. 

\begin{Prop} 
\label{prop:dual_action} 
Let $H$ be a subgroup of $\GL_{p+q}(\R)$, 
and $\UU$ be a set of representatives of the action of $H$ on $\MM_{(p,q)}$. 
Then, the following $\UU^\ast$ is a set of representative of the action of $H'$ on $\MM_{(p,q)}$$:$ 
\begin{align} 
\UU^\ast := \{ {}^t u^{-1} \mid u \in \UU \} . 
\end{align} 
\end{Prop}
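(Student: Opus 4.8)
The plan is to reduce the claim to a computation with double cosets, using Lemma~\ref{lem:representatives} as the working criterion for both the hypothesis and the conclusion. Recall that for the action of $H$ on $\MM_{(p,q)}$, the relevant double coset of $g \in \GL_{p+q}(\R)$ is $[[g]] = H\, g\, \OO(p,q)$, and $\UU$ being a set of representatives means: for every $g$, some $u \in \UU$ lies in $H\, g\, \OO(p,q)$. We must show the analogous statement for $H' = \{{}^t h \mid h \in H\}$ with the set $\UU^\ast = \{{}^t u^{-1} \mid u \in \UU\}$, namely that for every $g \in \GL_{p+q}(\R)$ there is $u \in \UU$ with ${}^t u^{-1} \in H'\, g\, \OO(p,q)$.

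The key observation is the behaviour of the relevant groups under the map $g \mapsto {}^t g^{-1}$. This map is an anti-automorphism composed with inversion, hence an automorphism of $\GL_{p+q}(\R)$; it sends $H$ to $H'$ since ${}^t h^{-1} = {}^t(h^{-1})$ and $H$ is a subgroup. Crucially, it fixes $\OO(p,q)$ setwise: if $k \in \OO(p,q)$ then ${}^t k\, I_{p,q}\, k = I_{p,q}$ gives $k^{-1} = I_{p,q}\, {}^t k\, I_{p,q}$, and a short manipulation shows ${}^t k^{-1} = I_{p,q}\, k\, I_{p,q} \in \OO(p,q)$ (using $I_{p,q}^2 = I_{p+q}$ and $I_{p,q} \in \OO(p,q)$). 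So the involution $\sigma(g) := {}^t g^{-1}$ carries the double coset $H\, g\, \OO(p,q)$ onto $\sigma(H)\, \sigma(g)\, \sigma(\OO(p,q)) = H'\, ({}^t g^{-1})\, \OO(p,q)$. [A minor bookkeeping point: $\sigma$ reverses products, $\sigma(h g k) = \sigma(k)\sigma(g)\sigma(h)$, but since both $H, \OO(p,q)$ are subgroups invariant under $\sigma$ — not merely under $\sigma$ composed with transpose — the set $\sigma(H\, g\, \OO(p,q))$ still equals $H'\, \sigma(g)\, \OO(p,q)$ as a set; one should state this carefully.]

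Granting this, the argument is: given any $g \in \GL_{p+q}(\R)$, apply the hypothesis to $\sigma(g) = {}^t g^{-1}$ to obtain $u \in \UU$ with $u \in H\, ({}^t g^{-1})\, \OO(p,q)$. Apply $\sigma$ to both sides: $\sigma(u) = {}^t u^{-1} \in \sigma\big(H\, ({}^t g^{-1})\, \OO(p,q)\big) = H'\, \sigma({}^t g^{-1})\, \OO(p,q) = H'\, g\, \OO(p,q)$, since $\sigma$ is an involution. As ${}^t u^{-1} \in \UU^\ast$ by definition, Lemma~\ref{lem:representatives} gives that $\UU^\ast$ is a set of representatives of the $H'$-action. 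The main obstacle is purely the careful handling of the order-reversal under $\sigma$ and the verification that $\OO(p,q)$ is genuinely $\sigma$-stable; once those are pinned down the rest is formal. One might alternatively phrase everything via the contragredient action on inner products (the map $\inner{}{} \mapsto$ the inner product with matrix the inverse of that of $\inner{}{}$), but the double-coset formulation above seems the most economical given that Lemma~\ref{lem:representatives} is already available.
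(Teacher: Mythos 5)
Your proof is correct and takes essentially the same route as the paper: both arguments apply the hypothesis on $\UU$ to the element ${}^t g^{-1}$ and use that $\OO(p,q)$ is stable under $k \mapsto {}^t k^{-1}$, the paper simply writing $u = h \cdot {}^t g^{-1} \cdot k$ and computing ${}^t u^{-1} = {}^t h^{-1} \cdot g \cdot {}^t k^{-1} \in H' g \, \OO(p,q)$ in place of your involution $\sigma$. (One small slip: $\sigma$ is a genuine automorphism, so it does \emph{not} reverse products and your bracketed worry is unnecessary.)
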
 

\begin{proof} 
Take any $g \in \GL_{p+q}(\R)$. 
In this proof, 
we write the double cosets by $H g \, \OO(p,q)$ and $H' g \, \OO(p,q)$, 
in order to distinguish them. 
Then, according to Lemma~\ref{lem:representatives}, 
we have only to prove that 
there exists $g_0 \in \UU^\ast$ such that $g_0 \in H' g \, \OO(p,q)$. 
Since ${}^t g^{-1} \in \GL_n(\R)$ and $\UU$ is a set of representatives of the action of $H$, 
there exists $u \in \UU$ such that $u \in H ({}^t g^{-1}) \OO(p,q)$. 
That is, one can write 
\begin{align} 
u = h \cdot {}^t g^{-1} \cdot k \quad (h \in H , \ k \in \OO(p,q)) . 
\end{align} 
We put $g_0 := {}^t u^{-1} \in \UU^\ast$. 
One thus has 
\begin{align} 
g_0 = {}^t u^{-1} = {}^t h^{-1} \cdot g \cdot {}^t k^{-1} \in H' g \, \OO(p,q) , 
\end{align} 
since $h^{-1} \in H$ and ${}^t k^{-1} \in \OO(p,q)$. 
This completes the proof. 
\end{proof}

\section{On the Lie groups of real hyperbolic spaces} 
\label{section:RHn} 

In this section, 
we study $G_{\RH^n}$, the Lie group of real hyperbolic space, 
and prove Theorem~\ref{thm:MTT1} and Corollary~\ref{cor:cor1}. 

First of all, 
we show that $\R^{\times} \Aut(\LG_{\RH^n})$ coincides with $Q_1$, studied in the previous section. 
We use the canonical basis $\{ e_1 , \ldots , e_n \}$ of $\LG_{\RH^n}$, 
which satisfies 
\begin{equation} 
\label{eq:bracket_def} 
[e_1, e_i] = e_i\qquad (\forall i \in \{2, \ldots , n\}) . 
\end{equation} 

\begin{Prop} 
\label{prop:RAut-RHn} 
The matrix expression of $\R^{\times} \Aut(\LG_{\RH^n})$ 
with respect to the canonical basis $\{ e_1 , \ldots , e_n \}$ coincides with $Q_1$. 
\end{Prop}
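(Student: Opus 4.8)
The plan is to compute $\Aut(\LG_{\RH^n})$ directly from the bracket relations \eqref{eq:bracket_def}, and then check that adjoining the scalars $\R^\times$ fills up exactly $Q_1$. First I would fix the decomposition $\LG_{\RH^n} = \R e_1 \oplus \LN$, where $\LN := \mathrm{span}\{e_2, \ldots, e_n\}$ is the nilradical (in fact the commutator subalgebra $[\LG_{\RH^n}, \LG_{\RH^n}]$, which is abelian here). Any $\varphi \in \Aut(\LG_{\RH^n})$ must preserve $[\LG_{\RH^n}, \LG_{\RH^n}] = \LN$, so $\varphi$ has block-upper-triangular form with respect to this decomposition: writing $\varphi(e_1) = a e_1 + v$ with $a \in \R$, $v \in \LN$, and $\varphi|_{\LN} = B \in \GL(\LN)$, the matrix of $\varphi$ in the canonical basis has first column $(a, *, \ldots, *)^t$ and lower-right block $B$, with the first row equal to $(a, 0, \ldots, 0)$. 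This already shows $\Aut(\LG_{\RH^n}) \subset Q_1$, hence $\R^\times \Aut(\LG_{\RH^n}) \subset Q_1$ since $Q_1$ is closed under scalar multiplication.

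Next I would impose the bracket condition $\varphi([e_1, e_i]) = [\varphi(e_1), \varphi(e_i)]$ for $i \in \{2, \ldots, n\}$. The left side is $\varphi(e_i) = B e_i \in \LN$. For the right side, since $\LN$ is abelian and $[e_1, w] = w$ for all $w \in \LN$, we get $[\varphi(e_1), \varphi(e_i)] = [a e_1 + v, B e_i] = a (B e_i)$. So the automorphism equations reduce to $B e_i = a B e_i$ for all $i$, i.e. $(a-1) B = 0$; since $B$ is invertible this forces $a = 1$. Thus $\Aut(\LG_{\RH^n})$ consists exactly of the matrices with first row $(1, 0, \ldots, 0)$, arbitrary first column below the diagonal entry, and arbitrary invertible lower-right $(n-1)\times(n-1)$ block — there is no further constraint on $v$ or $B$.

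Finally I would argue that multiplying by $\R^\times$ recovers all of $Q_1$: given an arbitrary element $g \in Q_1$ with $(1,1)$-entry $c \neq 0$, write $g = c \cdot (c^{-1} g)$; the matrix $c^{-1} g$ lies in $Q_1$ and has $(1,1)$-entry $1$, hence by the computation above it is an automorphism, so $g \in \R^\times \Aut(\LG_{\RH^n})$. This gives the reverse inclusion $Q_1 \subset \R^\times \Aut(\LG_{\RH^n})$ and completes the proof. I do not anticipate a genuine obstacle here; the only point requiring a little care is the bookkeeping that the vector $v$ (the $\LN$-component of $\varphi(e_1)$) is genuinely unconstrained — it drops out of the bracket equations precisely because $\LN$ is abelian — which is what makes the full lower-left column of $Q_1$ appear in $\Aut(\LG_{\RH^n})$ itself rather than only after scaling.
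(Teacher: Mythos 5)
Your proposal is correct and follows essentially the same route as the paper: both identify $\Aut(\LG_{\RH^n})$ as the subgroup of $Q_1$ with $(1,1)$-entry equal to $1$, by using that an automorphism preserves the abelian commutator ideal $[\LG_{\RH^n},\LG_{\RH^n}]=\mathrm{span}\{e_2,\ldots,e_n\}$ and that the bracket with $e_1$ forces $g_{11}=1$, and then recover all of $Q_1$ by rescaling. The only cosmetic difference is that you phrase the computation via the decomposition $\varphi(e_1)=ae_1+v$, $\varphi|_{\LN}=B$, whereas the paper packages it as the identity $[ge_1,ge_j]-g[e_1,e_j]=(g_{11}-1)ge_j$; you also spell out the final rescaling step, which the paper leaves implicit.
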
 

\begin{proof} 
We put $\LG := \LG_{\RH^n}$ for simplicity. 
It is sufficient to show 
\begin{align} 
\label{eq:aut-RHn} 
\Aut(\LG) = \left\{ \left( 
\begin{array}{c|cccc} 
 1	&0	&\cdots	&0	\\\hline
 \ast   &	&       &   	\\
 \vdots	&   	&\ast	&   	\\
 \ast	&   	&       &	\\
\end{array}
\right) \in \GL_n(\R) \right\} =: H_0.
\end{align}
In order to show this, we claim that 
\begin{align} \label{lem:4-1_eq1} 
[ge_1, ge_j] - g[e_1, e_j] = (g_{11} - 1) g e_j 
\quad (\forall g = (g_{ij}) \in Q_1 , \ \forall j \in \{ 2, \ldots, n \}) . 
\end{align} 
In fact, for such $g$ and $j$, one has 
$g e_j \in \mathrm{span} \{e_2, \ldots, e_n \}$, and hence 
\begin{align} 
\textstyle 
[ge_1, ge_j] = [\sum_{l=1}^n g_{l1} e_l , g e_j ] = [g_{11} e_1 , g e_j] = g_{11} g e_j . 
\end{align} 
This proves the above claim. 

We show $\Aut(\LG) \subset H_0$. 
Take any $g = (g_{ij}) \in \Aut(\LG)$. 
We need to show $g \in Q_1$ and $g_{11} = 1$. 
One has $g \in Q_1$, since $\Aut(\LG)$ preserves 
\begin{align} 
[\LG, \LG] = \mathrm{span} \{e_2, \ldots, e_n \} . 
\end{align} 
Furthermore, 
since $g \in \Aut(\LG)$, one has from (\ref{lem:4-1_eq1}) that 
\begin{align}
0 = (g_{11} - 1) g e_2 . 
\end{align}
Note that $g e_2 \neq 0$. 
We thus have $g_{11} = 1$, which shows $g \in H_0$. 

It remains to show $H_0 \subset \Aut(\LG)$. 
Take any $g = (g_{ij}) \in H_0$. 
Since $g$ preserves $\mathrm{span} \{e_2, \ldots, e_n \}$, one has 
\begin{align} 
g[e_j, e_k] = 0 = [ge_j, ge_k] \quad (\forall j, k \in \{ 2, \ldots, n \}) . 
\end{align}
Furthermore, since $g \in H_0 \subset Q_1$ and $g_{11} = 1$, 
one can see from (\ref{lem:4-1_eq1}) that 
\begin{align} 
[ge_1, ge_j] - g[e_1, e_j] = 0 \quad (\forall j \in \{ 2, \ldots, n \}) . 
\end{align}
This concludes $g \in \Aut(\LG)$, which completes the proof. 
\end{proof}

By Proposition~\ref{prop:set-of-representatives}, 
one has a set of representatives of the action of $Q_1$ on $\MM_{(p,q)}$, 
consisting of three points. 
We here prove Theorem~\ref{thm:MTT1} in terms of this set of representatives. 

\begin{proof}[Proof of Theorem~\ref{thm:MTT1}] 
Put $n := p+q$ with $p,q \geq 1$, 
and take an inner product $\inner{}{}$ of signature $(p,q)$ on $\mathfrak{g}_{\R \mathrm{H}^n}$. 
By Propositions~\ref{prop:set-of-representatives} and \ref{prop:RAut-RHn}, 
one knows that 
\begin{align} 
\mathfrak{U} := \{ I_n + \lambda E_{1,n} \mid \lambda = 0, 1, 2 \} 
\end{align} 
is a set of representatives of the action of $\R^\times \Aut(\LG_{\RH^n})$ 
with respect to $\{ e_1, \ldots , e_n \}$. 
Hence, it follows from 
Theorem~\ref{thm:pseudo-key}
that there exist $k > 0$, 
$\varphi \in \mathrm{Aut}(\mathfrak{g}_{\R \mathrm{H}^n})$, 
and $g_0 \in \mathfrak{U}$ such that $\{\varphi g_0 e_1, \ldots , \varphi g_0 e_n\}$ is pseudo-orthonormal 
with respect to $k \inner{}{}$. 
By the definition of $\mathfrak{U}$, 
there exists $\lambda \in \{0,1,2\}$ such that $g_0 = I_n + \lambda E_{1,n}$. 
We here put 
\begin{align} 
x_i := \varphi g_0 e_i \quad (i \in \{1, \ldots , n\}) . 
\end{align} 
Since $\{x_1, \ldots , x_n\}$ is 
pseudo-orthonormal 
with respect to $k \inner{}{}$, 
we have only to
show that the bracket relations among them are given by (i)--(iii) of Theorem~\ref{thm:MTT1}. 
We use 
\begin{equation}
\label{eq:lambda}
g_0 e_i = e_i \quad (\forall i \in \{1, \ldots , n-1\}), \qquad g_0 e_n = \lambda e_1 + e_n . 
\end{equation} 

First of all, we prove that the relation (i) holds. 
Take any $i \in \{2, \ldots , n-2\}$. 
Then it follows from
(\ref{eq:lambda}) 
that 
\begin{equation}
[g_0 e_1, g_0 e_i] = [e_1, e_i] = e_i = g_0 e_i . 
\end{equation}
Since $\varphi$ is an automorphism, one obtains (i) by
\begin{equation}
[x_1, x_i] = \varphi [g_0 e_1, g_0 e_i] = \varphi g_0 e_i = x_i . 
\end{equation}

We next
show that the relation (ii) holds. 
It follows from (\ref{eq:lambda}) that 
\begin{equation}
\label{eq:bra_1n}
[g_0 e_1, g_0 e_n] = [e_1, \lambda e_1 + e_n] 
= e_n 
= - \lambda e_1 + g_0 e_n = - \lambda g_0 e_1 + g_0 e_n . 
\end{equation}
By applying $\varphi$ to the both sides, one obtains (ii). 

We prove that the relation (iii) holds. 
Take any $i \in \{2, \ldots , n-1\}$. 
Then, (\ref{eq:lambda}) yields
that 
  \begin{equation}
    [g_0 e_i , g_0 e_n] = [e_i, \lambda e_1 + e_n] = -\lambda e_i = -\lambda g_0 e_i.
  \end{equation}
By applying $\varphi$ to the both sides, one obtains (iii). 

It remains to verify that the others bracket relations precisely vanish. 
Take any $i,j \in \{ 2, \ldots , n-1 \}$. 
Then it follows from (\ref{eq:lambda}) that  
  \begin{equation}
    [x_i, x_j] = \varphi [g_0 e_i, g_0 e_j] = \varphi  [e_i,  e_j] = 0 . 
  \end{equation}
This completes the proof of the proposition. 
\end{proof}

We next study the curvature properties of 
an arbitrary left-invariant metrics $\inner{}{}$ of signature $(p,q)$ on $G_{\RH^{p+q}}$, 
in terms of the basis $\{ x_1 , \ldots , x_{p+q} \}$ given in Theorem~\ref{thm:MTT1}. 
We put $n := p+q$, and calculate the curvatures under the normalization $k=1$. 
Recall that 
\begin{align} 
\varepsilon_i = \inner{x_i}{x_i} \in \{ \pm 1 \} . 
\end{align} 
First of all, 
we calculate the symmetric operator $U : \LG_{\RH^n} \times \LG_{\RH^n} \to \LG_{\RH^n}$ defined by, 
for any $X,Y,Z \in \LG_{\RH^n}$, 
\begin{align} 
2 \inner{U(X,Y)}{Z} = \inner{[Z,X]}{Y} + \inner{X}{[Z,Y]} . 
\end{align} 
Throughout the following calculations, let $i,j \in \{ 2, \ldots , n-1 \}$. 
Then, one can see from the bracket relations given in Theorem~\ref{thm:MTT1} that 
\begin{align} 
\begin{split} 
& 
U(x_1 , x_1) = \lambda \varepsilon_1 \varepsilon_n x_n , \qquad 
U(x_1 , x_i) = - (1/2) x_i , \\ 
& 
U(x_1 , x_n) = - (\lambda/2) x_1 - (1/2) x_n , \\ 
& 
U(x_i, x_j) = \delta_{ij} \varepsilon_i (\varepsilon_1 x_1 + \lambda \varepsilon_n x_n) , \\ 
& 
U(x_i, x_n) = - (1/2) \lambda x_i , \qquad 
U(x_n , x_n) = \varepsilon_1 \varepsilon_n x_1 . 
\end{split} 
\end{align} 
Recall that the Levi-Civita connection $\nabla$ of $(\LG_{\RH^n}, \inner{}{})$ is defined by 
\begin{align} 
\nabla_X Y := (1/2) [X,Y] + U(X,Y) \quad (X,Y \in \LG_{\RH^n}) . 
\end{align} 
Thus, one can directly calculate that 
\begin{align}
\label{nabla} 
\begin{split}
& 
\nabla_{x_{1}} x_{1} = \lambda \varepsilon _1 \varepsilon_n x_n , \quad 
\nabla_{x_{1}} x_{i} = 0 , \quad 
\nabla_{x_{1}} x_{n} = - \lambda x_1 , \\ 
& 
\nabla_{x_i} x_1 = - x_i , \quad 
\nabla_{x_i} x_{j} = \delta_{i j} \varepsilon _i (\varepsilon _1 x_1 + \lambda \varepsilon _n x_n) , \quad 
\nabla_{x_i} x_{n} = - \lambda x_i , \\ 
& 
\nabla_{x_{n}} x_{1} = - x_n , \quad 
\nabla_{x_{n}} x_{i} = 0 , \quad 
\nabla_{x_{n}} x_{n} = \varepsilon _1 \varepsilon_n x_1 . 
\end{split} 
\end{align} 
We next calculate the curvature tensor $R$, taken with the sign convention 
\begin{equation}
R(X,Y) = [\nabla_{X} , \nabla_{Y}] - \nabla _{[X,Y]} . \label{curv} 
\end{equation} 
In order to express $R$, 
we use the linear map $X \wedge Y : \LG_{\RH^n} \to \LG_{\RH^n}$, 
where $X, Y \in \LG_{\RH^n}$, 
defined by 
\begin{align} 
(X \wedge Y) Z = \inner{Y}{Z} X - \inner{X}{Z} Y . 
\end{align} 

\begin{Prop} 
We keep the above notations. 
Then, for every $X, Y \in \LG_{\RH^n}$, the curvature tensor $R$ satisfies 
\begin{align} 
R(X,Y) = - (\lambda^2 \varepsilon_n + \varepsilon_1) X \wedge Y . \label{curv2} 
\end{align} 
\end{Prop}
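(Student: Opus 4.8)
The plan is to first put the Levi-Civita connection~(\ref{nabla}) into a uniform closed form. Set $V := \varepsilon_1 x_1 + \lambda \varepsilon_n x_n \in \LG_{\RH^n}$. I claim that
\[ \nabla_X Y = \inner{X}{Y} V - \inner{V}{Y} X \qquad (\forall X, Y \in \LG_{\RH^n}) . \]
Both sides are $\R$-bilinear in $X$ and $Y$, so it suffices to verify this on the basis $\{x_1, \ldots, x_n\}$, which is a direct check against the nine families of formulas in~(\ref{nabla}): one uses only $\inner{x_1}{x_1} = \varepsilon_1$, $\inner{x_n}{x_n} = \varepsilon_n$, $\inner{x_1}{x_n} = 0$, together with the resulting values $\inner{V}{x_1} = 1$, $\inner{V}{x_n} = \lambda$, and $\inner{V}{x_i} = 0$ for $i \in \{2, \ldots, n-1\}$. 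As a sanity check, torsion-freeness of $\nabla$ forces $[X,Y] = \inner{V}{X} Y - \inner{V}{Y} X$, which is precisely a restatement of the bracket relations (i)--(iii) of Theorem~\ref{thm:MTT1}; I would record this identity as well, since it is needed below.

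Next I would compute the curvature directly from this closed form using the convention~(\ref{curv}). Since $G_{\RH^n}$ carries a left-invariant metric and $X, Y, Z$ are identified with left-invariant vector fields, the functions $\inner{X}{Y}$, etc., are constant on $G$, so covariant derivatives pass through them: $\nabla_X(\inner{W}{Z} V) = \inner{W}{Z} \nabla_X V$ for left-invariant $W, Z$. Expanding $\nabla_X \nabla_Y Z - \nabla_Y \nabla_X Z$ with the formula for $\nabla$, and $\nabla_{[X,Y]} Z$ with both that formula and $[X,Y] = \inner{V}{X} Y - \inner{V}{Y} X$, one finds that the terms proportional to $V$ cancel exactly — both the double-derivative term and $\nabla_{[X,Y]} Z$ contribute the $V$-component $(\inner{V}{X}\inner{Y}{Z} - \inner{V}{Y}\inner{X}{Z}) V$ — while the remaining $X$- and $Y$-components collect into
\[ R(X,Y)Z = -\inner{V}{V}\bigl( \inner{Y}{Z} X - \inner{X}{Z} Y \bigr) = -\inner{V}{V}\, (X \wedge Y) Z . \]
Finally $\inner{V}{V} = \varepsilon_1^2 \inner{x_1}{x_1} + \lambda^2 \varepsilon_n^2 \inner{x_n}{x_n} = \varepsilon_1 + \lambda^2 \varepsilon_n$, using $\inner{x_1}{x_n} = 0$ and $\varepsilon_i^2 = 1$, which is exactly~(\ref{curv2}).

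The main obstacle is organizational rather than conceptual: one must carry out the expansion of $R$ carefully and keep track of which bilinear expressions in $X, Y, Z, V$ appear in the double-derivative term versus in $\nabla_{[X,Y]} Z$, the crucial point being the cancellation of the $V$-components, which is really an avatar of the Jacobi identity for the bracket $[X,Y] = \inner{V}{X} Y - \inner{V}{Y} X$. If one prefers to avoid the closed form for $\nabla$, there is a routine alternative: $R$ is a tensor and antisymmetric, so it suffices to check $R(x_a, x_b) = -(\lambda^2 \varepsilon_n + \varepsilon_1)(x_a \wedge x_b)$ for the finitely many pairs $a < b$ directly from~(\ref{nabla}) and Theorem~\ref{thm:MTT1}, comparing against $(x_a \wedge x_b) x_c = \varepsilon_b \delta_{bc} x_a - \varepsilon_a \delta_{ac} x_b$; this works but splits into more cases and is less illuminating.
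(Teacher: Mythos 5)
Your proof is correct, and it takes a genuinely different route from the paper's. The paper's own argument is precisely the ``routine alternative'' you mention at the end: it computes $R(x_a,x_b)x_c$ for each of the four types of index pairs directly from the table \eqref{nabla} and matches each against $-(\lambda^2\varepsilon_n+\varepsilon_1)\,x_a\wedge x_b$. Your closed form $\nabla_X Y=\inner{X}{Y}V-\inner{V}{Y}X$ with $V=\varepsilon_1x_1+\lambda\varepsilon_nx_n$ does check out against all nine lines of \eqref{nabla}, the equivalent identity $[X,Y]=\inner{V}{X}Y-\inner{V}{Y}X$ reproduces (i)--(iii) of Theorem~\ref{thm:MTT1}, and the invariant expansion of $R$ goes through exactly as you describe: the $V$-components of $\nabla_X\nabla_YZ-\nabla_Y\nabla_XZ$ and of $\nabla_{[X,Y]}Z$ both equal $\bigl(\inner{V}{X}\inner{Y}{Z}-\inner{V}{Y}\inner{X}{Z}\bigr)V$ and cancel, the terms proportional to $\inner{V}{Z}$ also cancel, and what remains is $-\inner{V}{V}(X\wedge Y)Z$ with $\inner{V}{V}=\varepsilon_1+\lambda^2\varepsilon_n$. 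What your approach buys is conceptual clarity: it exhibits the metric Lie algebra as one of ``constant curvature type'' (bracket and connection both determined by the single covector $\inner{V}{\cdot}$), so the constancy of the sectional curvature --- and hence Corollary~\ref{cor:cor1} --- is visible before any computation, and the value $\varepsilon_1+\lambda^2\varepsilon_n$ appears as the causal character of $V$ rather than as the output of a case check. What the paper's computation buys is that it requires no preliminary guess of a closed form and stays entirely within the already-displayed data of \eqref{nabla}. One minor remark: your appeal to covariant derivatives ``passing through'' the coefficients is just the statement that $\nabla$ is a bilinear map on the Lie algebra and the coefficients $\inner{W}{Z}$ are scalars; invoking left-invariant vector fields on $G$ is harmless but unnecessary, since the whole computation is algebraic.
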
 

\begin{proof} 
First of all, we calculate $R(x_1 , x_i)$. 
By using \eqref{nabla}, one can directly see that 
\begin{align}
\begin{split} 
R(x_1, x_i) x_1 & =  (\lambda^2 \varepsilon_n +\varepsilon_1 ) \varepsilon_1 x_i, \\ 
R(x_1, x_i) x_j & =  -\delta _{i j} (\lambda^2 \varepsilon_n +\varepsilon_1 ) \varepsilon_i x_1, \\  
R(x_1, x_i) x_n & = 0 . 
\end{split} 
\end{align} 
This yields that 
\begin{align} 
R(x_1, x_i) =  - (\lambda^2 \varepsilon_n + \varepsilon_1 ) \varepsilon_1 x_1 \wedge x_i . 
\end{align} 
Similarly, one can directly calculate that 
\begin{align}
\begin{split} 
R(x_1, x_n) x_1 & =  (\lambda^2 \varepsilon_n + \varepsilon_1 ) \varepsilon_1 x_n , \\ 
R(x_1, x_n) x_i & = 0 , \\ 
R(x_1, x_n) x_n & =  - (\lambda^2 \varepsilon_n + \varepsilon_1 ) \varepsilon_n x_1 . 
\end{split} 
\end{align} 
This shows that $R(x_1 , x_n)$ agrees with \eqref{curv2}. 
One can also show the case for $R(x_i , x_j)$ by 
\begin{align} 
\begin{split} 
R(x_i, x_j) x_1 & = 0 , \\ 
R(x_i, x_j) x_k & = (\lambda^2 \varepsilon_n + \varepsilon_1 ) 
( \delta _{i k}  \varepsilon _i x_j -  \delta _{j k}  \varepsilon _j x_i) , \\  
R(x_i, x_j) x_n & = 0 . 
\end{split} 
\end{align} 
Finally, the case of $R(x_i, x_n)$ can be checked by 
\begin{align} 
\begin{split} 
R(x_i, x_n) x_1 & = 0 , \\ 
R(x_i, x_n) x_j & = \delta _{i j} (\lambda^2 \varepsilon_n +\varepsilon_1 ) \varepsilon_i x_n , \\ 
R(x_i, x_n) x_n & = - (\lambda^2 \varepsilon_n + \varepsilon_1 ) \varepsilon_n x_i . 
\end{split} 
\end{align} 
Since $R$ is skew-symmetric and bilinear, this completes the proof. 
\end{proof}

This proposition shows that the curvature tensor of $\LG_{\RH^{n}}$ has a simple form. 
We are now in position to complete the proof of Corollary~\ref{cor:cor1}. 

\begin{proof}[Proof of Corollary~\ref{cor:cor1}] 
Let $P$ be a nondegenerate tangent plane in $\LG_{\RH^{n}}$ with a basis $\{ X, Y \}$. 
Recall that the sectional curvature $K$ of $P$ is defined by 
\begin{align} 
K = \dfrac{\inner{R(X,Y)Y}{X}}{\inner{X}{X} \inner{Y}{Y}-\inner{X}{Y}^2} . 
\end{align} 
Then it follows from \eqref{curv2} that 
$(\LG_{\RH^n}, \inner{}{} )$ has constant sectional curvature 
$-(\lambda^2 \varepsilon_n + \varepsilon_1)$. 
This proves the first assertion. 

Since $p,q \geq 1$ by assumption, 
we have $\varepsilon_1=1$ and $\varepsilon_n = -1$. 
Hence, if $\lambda = 0$, $1$, or $2$, 
then we obtain the constant sectional curvature $-1$, $0$, or $3$, respectively. 
Note that they are the sectional curvatures under the normalization $k=1$. 
If $k$ varies over all positive real numbers, 
then the constant sectional curvature can take any real number. 
This proves the second assertion. 
\end{proof}

\begin{remark} 
The above arguments show that there are exactly three left-invariant pseudo-Riemannian metrics 
of signature $(p,q)$ on $G_{\RH^{p+q}}$ up to isometry and scaling, if $p,q \geq 1$. 
In fact, Theorem~\ref{thm:MTT1} yields that there are at most three, 
and Corollary~\ref{cor:cor1} shows that they cannot be isometric up to scaling, 
since the sign of the constant sectional curvatures are different. 
\end{remark}

\section{On the three-dimensional Heisenberg group} 
\label{section:H3} 

In this section, 
we apply our procedure to the three-dimensional Heisenberg group $H_3$ with Lie algebra $\LH_3$. 
Recall that Rahmani (\cite{R92}) proved that any left-invariant Lorentzian metric on $H_3$ 
can be classified into three types. 
Our argument gives an alternative proof of this fact. 

Throughout this section, 
we fix the canonical basis $\{ e_1 , e_2 , e_3 \}$ of $\LG = \LH_3$, 
whose bracket relations are given by 
\begin{align} 
[e_2 , e_3] = e_1 . 
\end{align} 
First of all, 
we describe $\R^{\times} \Aut(\LH_3)$ in terms of this basis. 
Note that the transpose of $Q_1$ in $\GL_3(\R)$ is given by 
\begin{align} 
Q'_1 = \left\{ \left(
\begin{array}{ccc}
\ast & \ast & \ast \\ 
0 & \ast & \ast \\ 
0 & \ast & \ast 
\end{array}
\right) \in \GL_3 (\R) \right\} . 
\end{align} 

\begin{Lem}
\label{lem:5-1} 
The matrix expression of $\R^{\times} \Aut(\LH_3)$ 
with respect to the canonical basis $\{ e_1 , e_2 , e_3 \}$ coincides with $Q'_1$. 
\end{Lem}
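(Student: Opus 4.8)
The plan is to compute $\Aut(\LH_3)$ explicitly with respect to the canonical basis $\{e_1, e_2, e_3\}$, and then observe that attaching the scalars $\R^\times$ produces exactly $Q'_1$. As in the proof of Proposition~\ref{prop:RAut-RHn}, the key structural fact is that any automorphism of $\LH_3$ preserves the derived algebra. Here $[\LH_3, \LH_3] = \mathrm{span}\{e_1\}$, which is also the center, so every $\varphi \in \Aut(\LH_3)$ must satisfy $\varphi(e_1) = a_{11} e_1$ for some $a_{11} \in \R^\times$. This already forces the matrix of $\varphi$ to have first column $(a_{11}, 0, 0)^t$, i.e. $\varphi \in Q'_1$.

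Next I would pin down which matrices in $Q'_1$ with first column $(a_{11},0,0)^t$ actually are automorphisms. Writing $\varphi(e_2) = a_{12} e_1 + a_{22} e_2 + a_{32} e_3$ and $\varphi(e_3) = a_{13} e_1 + a_{23} e_2 + a_{33} e_3$, the only nontrivial bracket condition to check is $\varphi([e_2, e_3]) = [\varphi(e_2), \varphi(e_3)]$, which reads $a_{11} e_1 = (a_{22} a_{33} - a_{32} a_{23}) e_1$. Hence $\varphi \in \Aut(\LH_3)$ if and only if $\varphi \in Q'_1$ and $a_{11} = \det\begin{pmatrix} a_{22} & a_{23} \\ a_{32} & a_{33} \end{pmatrix}$; in particular the lower-right $2\times2$ block is invertible and $a_{11}$ is determined by it (so $\Aut(\LH_3)$ is genuinely smaller than $Q'_1$). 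Conversely, any such matrix is invertible since its determinant is $a_{11} \cdot (a_{22}a_{33} - a_{32}a_{23}) = a_{11}^2 \neq 0$, so this describes $\Aut(\LH_3)$ completely.

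Finally I would take the closure under scaling. Given an arbitrary element $g = (g_{ij}) \in Q'_1$, its lower-right $2\times2$ block has nonzero determinant $\delta := g_{22}g_{33} - g_{32}g_{23}$ (otherwise $\det g = g_{11}\delta = 0$). Choose $c \in \R^\times$ with $c^2 = g_{11}/\delta$; then $c^{-1} g$ has $(1,1)$-entry $g_{11}/c = c\delta$ and lower-right block determinant $\delta/c^2 = \delta^2/g_{11}$, and one checks $c\delta = (\delta/c)^2 \cdot (\text{block det})$... more directly, $c^{-1}g$ satisfies the automorphism condition since its $(1,1)$-entry equals the determinant of its lower-right block (both equal $c\delta$, using $c^2\delta = g_{11}$ so $g_{11}/c = c\delta$ and $\delta/c^2 = g_{11}/c^2 \cdot \ldots$). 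So $g = c \cdot (c^{-1}g) \in \R^\times \Aut(\LH_3)$, giving $Q'_1 \subset \R^\times\Aut(\LH_3)$; the reverse inclusion is immediate from the first paragraph.

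The main obstacle, such as it is, is purely bookkeeping: getting the scaling computation in the last step right, i.e. verifying that the single determinant constraint cutting $\Aut(\LH_3)$ out of $Q'_1$ is exactly the constraint that disappears upon multiplying by $\R^\times$. There is no conceptual difficulty — it mirrors Proposition~\ref{prop:RAut-RHn} closely — but one should be careful that $c$ can be chosen real, which requires $g_{11}/\delta > 0$; if it is negative one multiplies instead by a $c$ with $c^2 = -g_{11}/\delta$ after first composing with $-I_3 \in \GL_3(\R)$, or more simply notes that $\R^\times$ already contains such sign adjustments, so the argument goes through in all cases.
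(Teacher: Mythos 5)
Your identification of $\Aut(\LH_3)$ is exactly the one the paper records: since every automorphism preserves $[\LH_3,\LH_3]=\mathrm{span}\{e_1\}$, the first column of its matrix is $(a_{11},0,0)^{t}$, and the single nontrivial bracket condition $\varphi([e_2,e_3])=[\varphi(e_2),\varphi(e_3)]$ forces $a_{11}=a_{22}a_{33}-a_{23}a_{32}$. This matches the paper's displayed description of $\Aut(\LH_3)$, and your overall route (compute $\Aut(\LH_3)$, then absorb the one remaining constraint into the $\R^\times$ factor) is the same as the paper's, which simply asserts the computation and calls the lemma an easy consequence.

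However, the scaling step as you wrote it does not close. For $g\in Q'_1$ put $\delta:=g_{22}g_{33}-g_{23}g_{32}\neq 0$. The condition for $c^{-1}g$ to lie in $\Aut(\LH_3)$ is that its $(1,1)$-entry $g_{11}/c$ equal the determinant $\delta/c^{2}$ of its lower-right $2\times 2$ block, i.e. $c\,g_{11}=\delta$, i.e. $c=\delta/g_{11}$. This is a \emph{linear} equation in $c$ with a unique solution in $\R^\times$, so there is no square root to extract and no sign obstruction. Your choice $c^{2}=g_{11}/\delta$ is not that solution: with it the $(1,1)$-entry of $c^{-1}g$ is $c\delta$ while the lower-right block determinant is $\delta/c^{2}=\delta^{2}/g_{11}$, and these agree only when $\delta=g_{11}$ --- which is why your verification trails off into ellipses and why you then worry (needlessly) about the sign of $g_{11}/\delta$. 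Replacing your $c$ by $\delta/g_{11}$ repairs the step and in fact simplifies it; the final caveat about composing with $-I_3$ can be deleted.
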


\begin{proof} 
One can directly show that the matrix expression of $\Aut(\LH_3)$ 
with respect to $\{ e_1 , e_2  , e_3 \}$ is 
\begin{align} 
\Aut(\LH_3) = \left\{ \left(
\begin{array}{ccc}
ad-bc & \ast & \ast \\ 
0 & a & b \\ 
0 & c & d 
\end{array}
\right) \mid ad-bc \neq 0 \right\} . 
\end{align} 
The lemma is an easy consequence of it. 
\end{proof}

Recall that $\UU = \{ I_3 + \lambda E_{1,3} \mid \lambda = 0 , 1 , 2 \}$ 
is a set of representatives of the action of $Q_1 \subset \GL_3(\R)$. 
Thus, by Proposition~\ref{prop:dual_action}, 
\begin{align} 
\label{eq:U-ast}
\UU^\ast := \{ {}^t u^{-1} \mid u \in \UU \} = \{ I_3 - \lambda E_{3,1} \mid \lambda = 0 , 1 , 2 \} 
\end{align} 
is a set of representatives of the action of $Q'_1 = \R^\times \Aut(\LH_3)$. 
In terms of this set of representatives, 
we can prove Theorem~\ref{thm:MTT2}.

\begin{proof}[Proof of Theorem~\ref{thm:MTT2}] 
Take an arbitrary inner product $\inner{}{}$ of signature $(2,1)$ on $\LH_3$. 
Since $\UU^\ast$ given in (\ref{eq:U-ast}) is a set of representatives, 
Theorem~\ref{thm:pseudo-key} yields that 
there exist $k > 0$, $\varphi \in \Aut(\LH_3)$, and $g_0 \in \UU^\ast$ such that 
$\{ \varphi g_0 e_1 , \varphi g_0 e_2 , \varphi g_0 e_3 \}$ 
is pseudo-orthonormal with respect to $k \inner{}{}$. 
Let us put 
\begin{align} 
x_i := \varphi g_0 e_i \quad (i \in \{ 1, 2, 3 \}) , 
\end{align} 
and study their bracket relations. 
One knows that there exists $\lambda \in \{ 0, 1, 2 \}$ such that $g_0 = I_3 - \lambda E_{3,1}$. 
We thus have 
\begin{align} 
g_0 e_1 = e_1 - \lambda e_3 , \quad 
g_0 e_2 = e_2 , \quad 
g_0 e_3 = e_3 . 
\end{align} 
This yields that 
\begin{align} 
[x_1 , x_2] = \varphi [ e_1 - \lambda e_3 , e_2 ] 
= \varphi \lambda e_1 
= \varphi \lambda (g_0 e_1 + \lambda g_0 e_3) 
= \lambda (x_1 + \lambda x_3) . 
\end{align} 
Other bracket products can be calculated similarly. 
\end{proof} 

We here recall the classification of left-invariant Lorentzian metrics on $H_3$ by Rahmani (\cite{R92}). 
In fact, Propositions~2.4 and 2.5 in \cite{R92} can be summarized as follows. 

\begin{Thm}[\cite{R92}] 
\label{thm:R92} 
Let $\inner{}{}$ be an arbitrary inner product on $\LH_3$ of signature $(2,1)$. 
Then, there exists a pseudo-orthonormal basis $\{ f_1 , f_2 , f_3 \}$ with respect to $\inner{}{}$ 
such that one of the following bracket relations holds$:$ 
\begin{enumerate} 
\item
$[f_2 , f_3] = \alpha f_1$, 
$[f_3 , f_1] = 0$, 
$[f_2 , f_1] = 0$, where $\alpha>0$, 
\item
$[f_2 , f_3] = 0$, 
$[f_3 , f_1] = 0$, 
$[f_2 , f_1] = \gamma f_3$, where $\gamma>0$, 
\item
$[f_2 , f_3] = 0$, 
$[f_3 , f_1] = f_2 - f_3$, 
$[f_2 , f_1] = f_2 - f_3$. 
\end{enumerate} 
\end{Thm}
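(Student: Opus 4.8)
The plan is to deduce Theorem~\ref{thm:R92} from the Milnor-type theorem just obtained, Theorem~\ref{thm:MTT2}. Fix an inner product $\inner{}{}$ of signature $(2,1)$ on $\LH_3$. Theorem~\ref{thm:MTT2} provides $k>0$, $\lambda\in\{0,1,2\}$, and a basis $\{x_1,x_2,x_3\}$, pseudo-orthonormal with respect to $k\inner{}{}$, with $[x_1,x_2]=\lambda(x_1+\lambda x_3)$, $[x_2,x_3]=x_1+\lambda x_3$, $[x_3,x_1]=0$. Since $k\inner{x_i}{x_j}=\varepsilon_i\delta_{ij}$, the rescaled vectors $f_i:=\sqrt{k}\,x_i$ form a basis pseudo-orthonormal with respect to $\inner{}{}$ itself, with bracket relations $[f_1,f_2]=\sqrt{k}\,\lambda(f_1+\lambda f_3)$, $[f_2,f_3]=\sqrt{k}(f_1+\lambda f_3)$, $[f_3,f_1]=0$. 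The derived algebra is $\mathrm{span}\{f_1+\lambda f_3\}$, and $\inner{f_1+\lambda f_3}{f_1+\lambda f_3}=\varepsilon_1+\lambda^2\varepsilon_3=1-\lambda^2$, so $\lambda=0,1,2$ correspond exactly to a spacelike, null, and timelike derived algebra; these are the cases (1), (2), (3) of Theorem~\ref{thm:R92}. It then remains, in each case, to move $\{f_1,f_2,f_3\}$ by an element of $\OO(2,1)$ — which preserves pseudo-orthonormality with respect to $\inner{}{}$, so no rescaling of the metric is involved — into Rahmani's normal form.

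The cases $\lambda=0$ and $\lambda=2$ are routine. For $\lambda=0$ one already has $[f_2,f_3]=\sqrt{k}\,f_1$, $[f_3,f_1]=0$, $[f_1,f_2]=0$, which is case (1) with $\alpha:=\sqrt{k}>0$. For $\lambda=2$, normalize the timelike derived direction as $w_0:=(f_1+2f_3)/\sqrt{3}$ and complete it, inside $(f_1+2f_3)^{\perp}$, by the spacelike unit vectors $p:=(2f_1+f_3)/\sqrt{3}$ and $f_2$; a short computation gives $[p,f_2]=3\sqrt{k}\,w_0$ and $[p,w_0]=[f_2,w_0]=0$, so after reflecting the timelike axis to fix the sign this becomes case (2) with $\gamma:=3\sqrt{k}>0$.

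The case $\lambda=1$ is where the argument must be done with care, and it is the main obstacle. Now $v:=f_1+f_3$ is null; passing to the null frame $\{v,u,f_2\}$ with $u:=(1/2)(f_1-f_3)$ (so $\inner{v}{v}=\inner{u}{u}=0$ and $\inner{v}{u}=\inner{f_2}{f_2}=1$), the relations reduce to $[u,f_2]=\sqrt{k}\,v$, $[v,f_2]=0$, $[v,u]=0$. The stabilizer of the null line $\R v$ in $\OO(2,1)$ is noncompact, and the boost $v\mapsto\mu v$, $u\mapsto\mu^{-1}u$, $f_2\mapsto f_2$ gives $[\mu^{-1}u,f_2]=\mu^{-2}\sqrt{k}\,(\mu v)$, so the choice $\mu:=k^{1/4}$ (which makes $\mu^{-2}\sqrt{k}=1$) turns this into the \emph{parameter-free} relation $[\mu^{-1}u,f_2]=\mu v$. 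Converting $\{\mu v,\mu^{-1}u,f_2\}$ back to a pseudo-orthonormal basis, say $g_1:=\mu^{-1}u+(1/2)\mu v$, $g_3:=\mu^{-1}u-(1/2)\mu v$, $g_2:=f_2$, and relabeling $(f_1,f_2,f_3):=(g_2,g_1,g_3)$, one obtains precisely case (3). The crux is the observation that a hyperbolic rotation in the null plane rescales the null derived direction — which is exactly why case (3) carries no continuous parameter, in contrast to (1) and (2) — so one must pick the correct element of the parabolic stabilizer of a null line; this feature has no Riemannian counterpart. Alternatively, one could re-run the moduli-space machinery of Sections~\ref{section:moduli}--\ref{sec:action} for the group $\Aut(\LH_3)$ in place of $\R^{\times}\Aut(\LH_3)$, proving Theorem~\ref{thm:R92} directly; the analysis of the null orbit would again be the delicate point.
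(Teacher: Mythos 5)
Your argument is correct --- I checked the bracket computations in all three cases, including the null-frame manipulation for $\lambda=1$ --- but note that the paper does not actually prove this statement: it is quoted from Rahmani \cite{R92}, and the closest the paper comes is Remark~6.3, which matches the three values of $\lambda$ in Theorem~\ref{thm:MTT2} to Rahmani's metrics $g_1,g_3,g_2$ by explicit relabelings, but only \emph{up to scaling} (the remark works under the normalization $k=1$). Your proposal upgrades that remark to a genuine proof of Theorem~\ref{thm:R92} as stated, i.e.\ with a basis pseudo-orthonormal for $\inner{}{}$ itself rather than for $k\inner{}{}$. You correctly identify where this matters: in cases (1) and (2) the factor $\sqrt{k}$ is simply absorbed into the free parameters $\alpha=\sqrt{k}$ and $\gamma=3\sqrt{k}$, whereas case (3) carries no continuous parameter, and there the scale must be killed by the boost $v\mapsto\mu v$, $u\mapsto\mu^{-1}u$ with $\mu=k^{1/4}$ in the noncompact stabilizer of the null line $\R(f_1+f_3)$ --- exactly the pseudo-Riemannian phenomenon (a non-precompact isotropy action on $\MM_{(p,q)}$) that the paper highlights in its introduction but does not spell out at this point. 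Your observation that $\inner{f_1+\lambda f_3}{f_1+\lambda f_3}=1-\lambda^2$ identifies the causal character of the derived algebra as the invariant separating the three cases is also a nice addition, since it shows the trichotomy is forced and not an artifact of the normal forms. In short: correct, same spirit as the paper's Remark~6.3, but it supplies the one step (removal of the scaling in the null case) that the paper leaves to the cited reference.
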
 

He denotes by $g_1$, $g_2$, and $g_3$ 
the left-invariant Lorentzian metrics corresponding to (1), (2), and (3), respectively. 
Finally in this section, 
we compare the above result to our Theorem~\ref{thm:MTT2}, 
and review some known curvature properties. 

\begin{remark} 
Let $\inner{}{}$ be an arbitrary inner product on $\LH_3$ of signature $(2,1)$. 
Then, according to Theorem~\ref{thm:MTT2}, 
we have three cases, namely $\lambda = 0,1,2$. 
\begin{enumerate} 
\item 
The case of $\lambda = 0$. 
Then, under a certain scaling ($k=1$), 
there exists a pseudo-orthonormal basis $\{ x_1 , x_2 , x_3 \}$ with respect to $\inner{}{}$ such that 
\begin{align*} 
[x_1 , x_2] = 0 , \quad [x_2 , x_3] = x_1 , \quad [x_3 , x_1] = 0 . 
\end{align*} 
Therefore, 
this pseudo-orthonormal basis satisfies the same bracket relations 
as in Theorem~\ref{thm:R92} (1) with $\alpha = 1$, 
and hence the metric coincides with $g_1$ in the above notation. 
It has been known that $g_1$ is not Einstein, but algebraic Ricci soliton 
(\cite{Onda2}, see also \cite{Onda}). 

\item 
The case of $\lambda = 1$. 
Then, up to scaling, 
there exists a pseudo-orthonormal basis $\{ x_1 , x_2 , x_3 \}$ with respect to $\inner{}{}$ such that 
\begin{align*} 
[x_1 , x_2] = x_1 + x_3 , \quad [x_2 , x_3] = x_1 + x_3 , \quad [x_3 , x_1] = 0 . 
\end{align*} 
Put $f_1 := x_2$, $f_2 := - x_1$, and $f_3 := x_3$. 
Then one can directly check that 
$\{ f_1 , f_2 , f_3 \}$ is pseudo-orthonormal and 
satisfies the same bracket relations as in Theorem~\ref{thm:R92} (3). 
Then the metric coincides with $g_3$, 
which is known to be flat (\cite{Nomizu}, see also \cite{RR06}). 

\item 
The case of $\lambda = 2$. 
Then, up to scaling, 
there exists a pseudo-orthonormal basis $\{ x_1 , x_2 , x_3 \}$ with respect to $\inner{}{}$ such that 
\begin{align*} 
[x_1 , x_2] = 2 (x_1 + 2 x_3) , \quad [x_2 , x_3] = x_1 + 2 x_3 , \quad [x_3 , x_1] = 0 . 
\end{align*} 
We define a pseudo-orthonormal basis $\{ f_1 , f_2 , f_3 \}$ by 
\begin{align*} 
f_1 := x_2 , \quad 
f_2 := 3^{-1/2} (2 x_1 + x_3) , \quad 
f_3 := 3^{-1/2} (x_1 + 2 x_3) . 
\end{align*} 
One can directly show that 
it satisfies the same bracket relations as in Theorem~\ref{thm:R92} (2) with $\gamma = 3$. 
Then the metric corresponds to $g_2$, 
which is not Einstein, but algebraic Ricci soliton 
(\cite{Onda2}, see also \cite{Onda}). 
\end{enumerate} 
\end{remark} 

For the notation of algebraic Ricci soliton metrics in the pseudo-Riemannian case, 
we refer to \cite{Onda2} and \cite{BO11a}. 
Note that algebraic Ricci soliton metrics give rise to left-invariant Ricci soliton metrics.

\end{document}